\numberwithin{equation}{section}
\providecommand{\norm}[1]{\big\lVert#1\big\rVert}
\newcommand{\R}{\mathbb{R}}
\newcommand{\C}{\mathbb{C}}
\newcommand{\Z}{\mathbb{Z}}
\newcommand{\N}{\mathbb{N}}
\newcommand{\T}{\mathbb{T}}
\renewcommand{\l}{\lambda}
\renewcommand{\L}{\Lambda}
\newcommand{\detp}{{\det}^\prime}
\newcommand{\sump}[1]{{\sum_{#1}}^\prime}
\newcommand{\tr}{\textnormal{tr}}
\theoremstyle{plain}
\newtheorem{theorem}{Theorem}[section]
\theoremstyle{plain}
\theoremstyle{plain}
\newtheorem{lemma}[theorem]{Lemma}
\theoremstyle{plain}
\newtheorem{proposition}[theorem]{Proposition}
\theoremstyle{definition}
\newtheorem{definition}[theorem]{Definition}
\theoremstyle{remark}
\theoremstyle{remark}
\theoremstyle{definition}
\begin{document}
\title[Extremal Determinants of Laplace--Beltrami Operators for Rectangular Tori]{Extremal Determinants of Laplace--Beltrami Operators for Rectangular Tori}
\author[Markus Faulhuber]{Markus Faulhuber}
\address{NuHAG, Faculty of Mathematics, University of Vienna\\Oskar-Morgenstern-Platz 1, 1090 Vienna, Austria}
\email{markus.faulhuber@univie.ac.at}
\thanks{The author was partially supported by the Vienna Science and Technology Fund (WWTF): VRG12-009 and by an Erwin-Schrödinger fellowship of the Austrian Science Fund (FWF): J4100-N32. This work was partially established during the first stage of the fellowship, which the author spent with the Analysis Group at NTNU Trondheim, Norway. The computational results presented have been achieved (in part) using the Vienna Scientific Cluster (VSC). The author wishes to thank the anonymous referee for a very careful reading and catching some typos. Also, the referee caught a flaw in the first version of the proof of Proposition \ref{prop_log_log_concave}.}

\begin{abstract}
	In this work we study the determinant of the Laplace--Beltrami operator on rectangular tori of unit area. We will see that the square torus gives the extremal determinant within this class of tori. The result is established by studying properties of the Dedekind eta function for special arguments. Refined logarithmic convexity and concavity results of the classical Jacobi theta functions of one real variable are deeply involved.
\end{abstract}

\subjclass[2010]{26A51, 33E05}
\keywords{Convexity, Dedekind Eta Function, Jacobi Theta Function, Laplace-Beltrami Operator, Logarithmic Derivative, Rectangular Torus}

\maketitle

\section{Introduction}\label{sec_Intro}

The search for extremal geometries is a popular topic in many branches of mathematics and mathematical physics. In this work, we pick up a result by Osgood, Phillips and Sarnak \cite{Osgood_Determinants_1988} on extremals of determinants of Laplace--Beltrami operators on tori and restrict the assumptions, excluding their solution of the following problem.

Among all 2-dimensional tori of area 1, which torus maximizes the determinant of the Laplace-Beltrami operator?

The answer in \cite{Osgood_Determinants_1988} is that the torus identified with the plane modulo a hexagonal (sometimes called triangular or equilateral) lattice gives the unique solution. However, if we only consider rectangular lattices, this solution is not possible and the natural assumption is that the square lattice will lead to the optimal solution. We will prove that this is indeed the case. Both problems, the one for general and the one for rectangular lattices, are closely related to the study of extremal values of the heat kernel on the torus \cite{Baernstein_HeatKernel_1997}, \cite{BaernsteinVinson_Local_1998}, finding extremal bounds of Gaussian Gabor frames of given density \cite{Faulhuber_PhD_2016}, \cite{Faulhuber_Hexagonal_2018}, \cite{FaulhuberSteinerberger_Theta_2017}, as well as the study of certain theta functions \cite{Montgomery_Theta_1988}.

It is worth noting that in all cases the extremal solutions are the same as for the classical sphere packing and covering problem in the plane. This immediately raises the question about extremal solutions for the above problems in higher dimensions. An interesting aspect is that in higher dimensions the optimal arrangements for the sphere packing and covering problem differ from each other. Hence, in order to guess what the right solution could be, one first has to decide whether one deals with a packing or a covering problem. However, we will not discuss higher dimensions in this work.

Another common theme is that we deal with theta functions in one way or another, which take a prominent role in several branches of mathematics. They appear in the studies on energy minimization \cite{BeterminPetrache_DimensionReduction_2017}, the study of Riemann's zeta and xi function \cite{CoffeyCsordas_2013}, \cite{Csordas_2015} or the theory of sphere packing and covering \cite{ConSlo98}, including the recent breakthrough for sphere packings in dimension 8 by Viazovska \cite{Viazovska8_2017} and in dimension 24 \cite{Viazovska24_2017}. They also appear in the field of time-frequency analysis and the study of Gaussian Gabor frames \cite{Faulhuber_Hexagonal_2018}, \cite{FaulhuberSteinerberger_Theta_2017}, \cite{Jan96} or the study of the heat kernel \cite{SteSha_Complex_03} to name just a few.

An open question concerns how the above problems can be linked to old, unsolved problems in geometric function theory, namely finding the exact values of Bloch's constant (1925) \cite{Blo25} and Landau's constant (1929) \cite{Lan29}. The correct solutions are conjectured to be given in the work of Ahlfors and Grunsky \cite{AhlGru37} and in the work of \footnote{In an unpublished work, Robinson came up with the same solution as Rademacher in 1937.}{Rademacher} \cite{Rad43}, respectively.

Baernstein repeatedly suggested that a better understanding of the behavior of extremal values of the heat kernel on the torus might lead to new insights for the mentioned constants \cite{Baernstein_ExtremalProblems_1994}, \cite{Baernstein_HeatKernel_1997}, \cite{BaernsteinVinson_Local_1998} and, after some research, the author shares this opinion. In fact, this work is a result of the author's study on a conjecture of Baernstein, Eremenko, Frytnov and Solynin \cite{Baernstein_Metric_2005} related to Landau's constant, which Eremenko posed again as an open problem in an unpublished preprint in 2011 \cite{Eremenko_Hyperbolic_2011}. However, besides the fact that in both cases varying metrics on rectangular tori are involved, it is not clear to the author how deep the connection between this work and the mentioned conjecture in \cite{Baernstein_Metric_2005} and \cite{Eremenko_Hyperbolic_2011} truly is.

Let us return to the question posed at the beginning. In \cite{Osgood_Determinants_1988} we find the following result, which is \footnote{In \cite{Osgood_Determinants_1988} there is a typo concerning the spanning vectors of the extremal lattice, it should read $\L = \sqrt{\tfrac{2}{\sqrt{3}}} \left\langle 1, \tfrac{1+i\sqrt{3}}{2} \right\rangle_\Z$.}{Corollary 1.3(b)} in that work. For a lattice $\L$, let $\Delta_\L$ be the Laplace-Beltrami operator for the torus $\T_\L  = \C/\L$ of unit area, then
\begin{equation}
	\detp \Delta_\L \leq \tfrac{\sqrt{3}}{2} \, | \eta ( \tfrac{1 + i\sqrt{3}}{2} ) |^4 \approx 0.35575 \dots
\end{equation}
with equality if and only if $\L$ is hexagonal. The result was established by first showing that the hexagonal lattice gives a local maximum by exploiting general facts about modular forms. Then, a numerical check gave the result that this local maximum is indeed global.

In another work, Sarnak \cite{Sarnak_Extremal_1997} mentions that \footnote{Peter Sarnak does not mention any reference for this claim. Anton Karnaukh was his PhD student around that time, but the author also could not find any result in that direction in Karnaukh's thesis \cite{Karnaukh_Doctoral_1996}.}{Karnaukh} has shown that the square lattice gives the only other critical point and that it is a saddle point. In particular, this implies that there exists a one--parameter family of lattices, within which the square lattice yields a local maximum of the determinant. Actually, we will see that among all rectangular tori, the square torus gives the global maximum. Our main result is as follows.

\begin{theorem}[Main Result]\label{thm_main}
	For $\alpha \in \R_+$, we denote the rectangular torus of unit area by $\T_\alpha = \C/(\alpha^{-1/2} \Z \times i \,   \alpha^{1/2} \Z)$ and the Laplace-Beltrami operator by $\Delta_\alpha$. Then
	\begin{equation}
		\detp \Delta_\alpha = \alpha \, | \eta(\alpha i) |^4 \leq | \eta(i) |^4 \approx 0.34830 \dots
	\end{equation}
	with equality if and only if $\alpha = 1$.
\end{theorem}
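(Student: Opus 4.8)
The plan is to reduce the theorem to a one-dimensional optimization and to exploit a hidden symmetry. First I would observe that for $\alpha \in \R_+$ the argument $\alpha i$ is purely imaginary, so $\eta(\alpha i) = e^{-\pi\alpha/12}\prod_{n\geq 1}(1 - e^{-2\pi n\alpha})$ is real and positive. Hence $\detp\Delta_\alpha = \alpha\,\eta(\alpha i)^4 =: f(\alpha)$ is a smooth positive function on $(0,\infty)$, and $|\eta(i)|^4 = \eta(i)^4 = f(1)$. The goal becomes showing that $f$ attains its strict global maximum at $\alpha = 1$. The decisive structural input is a functional equation: applying the modular transformation $\eta(-1/\tau) = \sqrt{-i\tau}\,\eta(\tau)$ at $\tau = \alpha i$ yields $\eta(i/\alpha) = \sqrt{\alpha}\,\eta(\alpha i)$, whence
\[
 f(1/\alpha) = \tfrac1\alpha\,\eta(i/\alpha)^4 = \tfrac1\alpha\,\alpha^2\,\eta(\alpha i)^4 = \alpha\,\eta(\alpha i)^4 = f(\alpha).
\]
Thus $f$ is invariant under the involution $\alpha \mapsto 1/\alpha$, whose unique fixed point is $\alpha = 1$. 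In particular $1$ is forced to be a critical point, and it suffices to prove that $f$ is strictly decreasing on $(1,\infty)$; the symmetry then delivers strict increase on $(0,1)$ and the claimed strict global maximum.

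To analyze monotonicity I would pass to $g = \log f$, so that $g(\alpha) = \log\alpha + 4\log\eta(\alpha i)$. Using $\tfrac{d}{d\tau}\log\eta(\tau) = \tfrac{\pi i}{12}E_2(\tau)$ for the weight-two Eisenstein series $E_2$, a short computation gives
\[
 g'(\alpha) = \tfrac1\alpha - \tfrac\pi3\,E_2(\alpha i) = -\tfrac\pi3\Big(E_2(\alpha i) - \tfrac{3}{\pi\alpha}\Big) = -\tfrac\pi3\,E_2^*(\alpha i),
\]
where $E_2^*(\tau) = E_2(\tau) - \tfrac{3}{\pi\operatorname{Im}\tau}$ is the non-holomorphic, genuinely weight-two modular correction of $E_2$. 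Its modularity $E_2^*(-1/\tau) = \tau^2 E_2^*(\tau)$ evaluated at the fixed point $\tau = i$ gives $E_2^*(i) = 0$, reconfirming that $\alpha = 1$ is critical. Everything then reduces to the sign statement $E_2^*(\alpha i) > 0$ for all $\alpha > 1$.

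The main obstacle is precisely this sign statement: since $E_2$ is only quasi-modular there is no term-by-term monotonicity to exploit, and the required fact is a genuine cancellation phenomenon. This is where I would bring in the Jacobi theta functions of one real variable. Via Jacobi's identity $2\eta^3 = \theta_2\theta_3\theta_4$ one has $4\log\eta(\alpha i) = \tfrac43\big(\log\theta_2(\alpha i) + \log\theta_3(\alpha i) + \log\theta_4(\alpha i) - \log 2\big)$, so that $g'(\alpha)$ becomes $\tfrac1\alpha$ plus a combination of the logarithmic derivatives $\theta_j'/\theta_j$ evaluated along the imaginary axis. These are exactly the quantities that the paper's refined convexity and concavity lemmas are designed to control. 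Concretely, I would reformulate the problem in the variable $t = \log\alpha$, where $h(t) = g(e^t)$ is even by the functional equation, and aim to prove that $h$ is strictly concave using the theta-function estimates; an even, smooth, strictly concave function with $h'(0)=0$ satisfies $h'(t)<0$ for $t>0$, which is the desired strict decrease of $f$ on $(1,\infty)$.

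The crux — the part that demands the full strength of the cited logarithmic convexity and concavity results — is to upgrade the statement "$\alpha = 1$ is a critical point" to "$\alpha = 1$ is the only critical point", i.e.\ to rule out any further sign change of the combined theta log-derivative (equivalently, of $E_2^*(\alpha i)$) for $\alpha > 1$. I expect the delicate point to be that each individual $\log\theta_j$ need not have a fixed convexity behaviour in $t$, so the argument must track the specific weighted combination arising from $2\eta^3 = \theta_2\theta_3\theta_4$ and extract its one-signed second derivative from the refined theta estimates. Once that single-sign-change property is secured, the symmetry $f(\alpha)=f(1/\alpha)$ closes the argument and pins down $\alpha = 1$, with the numerical value $f(1)=\eta(i)^4 \approx 0.34830\dots$ following from the known closed form of $\eta(i)$.
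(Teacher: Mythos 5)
Your setup is correct and matches the paper's: the modular relation $\eta(-1/\tau)=\sqrt{-i\tau}\,\eta(\tau)$ gives $f(\alpha)=f(1/\alpha)$ for $f(\alpha)=\alpha\,\eta(\alpha i)^4$, so $\alpha=1$ is a critical point and it suffices to prove that $f$ is strictly decreasing on $(1,\infty)$; your identity $g'(\alpha)=-\tfrac{\pi}{3}E_2^*(\alpha i)$ is also right. But the proposal stops exactly where the real work begins: the sign statement $E_2^*(\alpha i)>0$ for $\alpha>1$ (equivalently, strict concavity of $h(t)=\log f(e^t)$) is nowhere established. You flag it yourself as ``the crux'' and defer it to unspecified refined theta estimates, so what you have is a correct skeleton with the decisive inequality missing. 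Moreover, the route you sketch --- tracking the convexity of each $\log\theta_j$ separately --- runs into precisely the obstruction you anticipate: on the logarithmic scale, $x\,\theta_2'(x)/\theta_2(x)$ and $x\,\theta_4'(x)/\theta_4(x)$ are strictly decreasing while $x\,\theta_3'(x)/\theta_3(x)$ is strictly \emph{increasing} (Propositions \ref{prop_theta4_monotone}, \ref{prop_theta2_monotone} and \ref{prop_theta3_monotone}), so the three terms pull in opposite directions and no termwise combination of those convexity results by itself yields the concavity of $h$.

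The missing idea is in fact simpler than what you propose, and your assertion that ``since $E_2$ is only quasi-modular there is no term-by-term monotonicity to exploit'' is wrong: term-by-term monotonicity is exactly how the paper closes the argument. Using the product form of your identity $2\eta(\alpha i)^3=\theta_2\theta_3\theta_4=\theta_1^+(\alpha)=2e^{-\pi\alpha/4}\prod_{k\geq1}\bigl(1-e^{-2\pi k\alpha}\bigr)^3$ (equivalently, the Lambert series of $E_2$), one computes
\begin{equation}
	\alpha\,\frac{d}{d\alpha}\log f(\alpha)\;=\;1-\frac{\pi\alpha}{3}E_2(\alpha i)\;=\;1-\frac{\pi\alpha}{3}+4\sum_{k\geq 1}\frac{2\pi k\alpha}{e^{2\pi k\alpha}-1},
\end{equation}
and every non-constant term on the right is strictly decreasing in $\alpha$, since $u\mapsto u/(e^u-1)$ is strictly decreasing on $\R_+$. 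Hence $\alpha f'(\alpha)/f(\alpha)$ is strictly decreasing; since it vanishes at $\alpha=1$ by your symmetry, it is negative for $\alpha>1$ and positive for $\alpha<1$, which gives the strict global maximum at $\alpha=1$. This is precisely the paper's Theorem \ref{thm_psi} for $\psi_1(x)=x^{3/4}\theta_1^+(x)=2x^{3/4}|\eta(ix)|^3$, and it bypasses the individual $\theta_j$ convexity results entirely. A smaller point: the theorem also asserts the identity $\detp\Delta_\alpha=\alpha\,|\eta(\alpha i)|^4$, which you take as given; the paper derives it from Kronecker's limit formula in Section \ref{sec_torus}, so a self-contained proof should at least reference that step.
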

Our proof of Theorem \ref{thm_main} will at first be parallel to the proof in \cite{Osgood_Determinants_1988}, in particular, we will show that the problem about the determinant can be transferred to a problem of finding the maximum of the Dedekind eta function on a ray in the upper half plane. After that point, the proof will differ greatly from the methods in \cite{Osgood_Determinants_1988}. Not only will we show that the square lattice yields the global maximum, we will also give a precise behavior of the determinant as the lattice parameter varies. The key in our proof is to exploit the fact that the eta function can be decomposed into a product of Jacobi's theta functions. The result will follow from certain logarithmic convexity and concavity results, partially established already in \cite{FaulhuberSteinerberger_Theta_2017}.

This work is structured as follows;
\begin{itemize}
	\item In Section \ref{sec_torus} we recall the definitions of Laplace--Beltrami operators on tori and their determinants as well as the results from the work of Osgood, Phillips and Sarnak \cite{Osgood_Determinants_1988}. Also, we will see how the determinant connects with the Dedekind eta function.
	
	\item In Section \ref{sec_Jacobi} we define Jacobi's classical theta functions of one real variable and show how they can be used to express the Dedekind eta function. The proof of the main result will follow from refined logarithmic convexity and concavity statements related to Jacobi's theta functions as described by Faulhuber and Steinerberger \cite{FaulhuberSteinerberger_Theta_2017}.
\end{itemize}

\section{The Laplace--Beltrami Operator on the Torus}\label{sec_torus}

In this section we recall the results established in \cite{Osgood_Determinants_1988} and how the problem of finding extremal surfaces for determinants of Laplace--Beltrami operators connects with the Dedekind eta function. Osgood, Phillips and Sarnak studied the determinant of the Laplace--Beltrami operator on a surface with varying metric as a function of the metric. In the case of the torus with flat metric, the varying of the metric can be interpreted as varying the lattice associated to the torus (and keeping the standard metric). Furthermore, we will introduce the heat kernel of a Laplace--Beltrami operator and its determinant.

We denote the Laplace--Beltrami operator on the 2-dimensional torus $\T_\L = \C/\L$ by $\Delta_\L$. The torus is identified with the fundamental domain of the lattice $\L$ which is a discrete subgroup of $\C$. A lattice $\L$ is generated by integer linear combinations of two nonzero complex numbers $z_1$ and $z_2$ with the property that $\frac{z_1}{z_2} \notin \R$;
\begin{equation}
	\L = \langle z_1, z_2 \rangle_\Z = \{m z_1 + n z_2 \mid m,n \in \Z, \, z_1, z_2 \in \C, \, \tfrac{z_1}{z_2} \notin \R \}.
\end{equation}
The area of the torus is then defined to be the area of a fundamental domain, i.e.,
\begin{equation}
	\textnormal{area}(\L) = | Im(\overline{z_1} z_2)| = |x_1 y_2 - x_2 y_1|, \qquad z_k = x_k + i \, y_k, \, k = 1,2.
\end{equation}
In this work we solely deal with rectangular lattices, i.e., we can choose a basis of the lattice $\langle z_1, z_2 \rangle_\Z$ with the property that the ratio of $z_1$ and $z_2$ is purely imaginary;
\begin{equation}
	\L \textnormal{ rectangular} \quad \Longleftrightarrow \quad \exists z_1, z_2 \in \C \colon \L = \langle z_1, z_2 \rangle_\Z \; \wedge \; i \frac{z_1}{z_2} \in \R.
\end{equation}
We note that any rectangular lattice can be identified with $\alpha \Z \times i \, \beta \Z$ where $\alpha, \beta \in \R_+$. In this work, it will be no restriction to assume that the lattice has unit area, i.e., $\alpha \beta = 1$.

\subsection{The Laplace--Beltrami Operator on Manifolds and its Heat Kernel}
We will now introduce Laplace--Beltrami operators on connected Riemannian manifolds as well as the associated heat kernel and the determinant. For further reading on heat kernels we refer to \cite{Jorgenson_Heat_2001}. Also, the procedure of introducing determinants of Laplace--Beltrami operators is described in \cite{BaernsteinVinson_Local_1998}, \cite{Osgood_Determinants_1988} or \cite{Sarnak_Determinants_1990}, and we will follow these references.

Just for the moment, let us change to a more general notation. Let $M$ be a connected Riemannian manifold and $\Delta_M$ the corresponding Laplace--Beltrami operator. The heat semigroup is defined as
\begin{equation}
	P_t = \{ e^{t \Delta_M} \mid t \in \R_+ \}.
\end{equation}
The action of the group $P_t$ on a function $f \in L^2(M)$ is (by abuse of notation) given by the integral operator
\begin{equation}
	P_t f(x) = \int_M p_t(x,y) f(y) \, d \mu(y),
\end{equation}
where $\mu(y)$ is the \footnote{$(M, \mu)$ can also be a weighted manifold with $\mu$ being any measure with smooth positive density with respect to the Riemannian measure.}{Lebesgue measure on $M$}. The integral kernel $p_t$ is called the heat kernel of the Laplace--Beltrami operator. For any $y \in M$, the heat kernel fulfills the heat equation
\begin{equation}
	\Delta_M u - \partial_t u = 0
\end{equation}
and for any $y \in M$
\begin{equation}
	p_t(\, . \, , \, y) \to \delta_y, \qquad t \to 0.
\end{equation}
If the spectrum of $\Delta_M$ is discrete and consists of eigenvalues $\{\l_k\}_{k=1}^\infty$ with $\{\phi_k\}_{k=1}^\infty$ being the sequence of corresponding eigenfunctions, constituting an orthonormal basis for $L^2(M)$, then the heat kernel can be expanded as
\begin{equation}
	p_t(x,y) = \sum_{k=1}^\infty e^{-\l_k t} \phi_k(x) \overline{\phi_k(y)}.
\end{equation}
Also, in this case the trace of the heat kernel is given by
\begin{equation}
	\tr(e^{t \Delta_M}) = \int_M p_t(x,x) \, d \mu(x) = \sum_{k=1}^\infty e^{-\l_k t}.
\end{equation}
The left--hand side of the above equation defines the trace of the heat kernel in general. To the Laplace--Beltrami operator one associates a zeta function in the following way
\begin{equation}
	Z_M(s) = \sum_{k=1}^\infty \l_k^{-s}, \qquad Re(s) > 1.
\end{equation}
This function can be continued analytically in $\C \backslash \{ 1 \}$. Formally, the determinant of $\Delta_M$ is given by
\begin{equation}
	\det \Delta_M = \prod_{\l_k \neq 0} \l_k.
\end{equation}
This product is not necessarily meaningful and the proper definition of the determinant is to use the zeta regularization
\begin{equation}
	\detp \Delta_M = e^{\left. -\tfrac{d}{ds} Z_M \right\rvert_{s=0}}.
\end{equation}
A closely related function is the height function of a Riemannian manifold \cite{Sarnak_Determinants_1990},
\begin{equation}
	h_M = - \log \detp \Delta_M = \left. \dfrac{d}{ds} Z_M \right\rvert_{s=0}.
\end{equation}
This function is an isospectral invariant and it is clear that problems about $\detp \Delta_M$ can be transferred to problems about $h_M$ and vice versa.

\subsection{The Determinant on the Torus}
We return to the case of the torus $\C/\L$ where $\L$ is a lattice of unit area. As we will see, it is no restriction to assume that $\L = c \, \langle 1, z\rangle_\Z$, where $c$ is chosen such that the area of the lattice is 1, i.e., $c = Im(z)^{-1/2}$. In other words, the problem under consideration is invariant under rotation and scaling, just like the classical sphere packing and covering problems.

In this case, the eigenvalues of the Laplace--Beltrami operator $\Delta_\L$ are $(2 \pi |z_\l|)^2$, where $z_\l \in \L$. At this point, we mention that the eigenvalues are actually given by $(2 \pi |z_\l^\bot|)^2$, $z_\l^\bot \in \L^\bot$, the dual lattice. However, for 2-dimensional lattices, the relation between a lattice and its dual lattice is simply given by
\begin{equation}
	\L^\bot = \textnormal{area}(\L)^{-1} \, i \, \L.
\end{equation}
This means that the dual lattice is a 90 degrees rotated, scaled version of the original lattice. However, as mentioned we deal with lattices of unit area, hence the scaling factor is irrelevant, as is the rotation. Thus, in our concrete situation, there is no need to distinguish between the lattice $\L$ and its dual $\L^\bot$ (this only results in a re--labeling of the eigenvalues). The zeta function is, consequently, given by
\begin{equation}
	Z_\L(s) = \sump{z_\l \in \L} (2 \pi | z_\l |)^{-2s},
\end{equation}
where the prime indicates that the sum does not include the origin. Using the definition of the lattice $\L = \langle z_1, z_2 \rangle_\Z$, we re-write the zeta function as
\begin{equation}
	Z_\L(s) = (2 \pi)^{-2s} \sump{k,l \in \Z} \frac{1}{|k z_1 + l z_2|^{2s}} = (2 \pi)^{-2s} \sump{k,l \in \Z} \frac{y^s}{|k + l z|^{2s}}.
\end{equation}
For the last equality we set $z = \tfrac{z_2}{z_1}$ and $y = Im(z) > 0$, where the second condition is imposed by the fact that the lattice has unit area. Osgood, Phillips and Sarnak \cite{Osgood_Determinants_1988} now use the fact that the last series is a multiple of the Eistenstein series
\begin{equation}
	E_\L(z,s) = \sump{k,l \in \Z} \frac{y^s}{|k + l z|^{2s}},
\end{equation}
hence,
\begin{equation}
	Z_\L(s) = (2 \pi)^{-2 s} E_\L(z,s).
\end{equation}
The final step in order to compute $\detp \Delta_\L$ is now to differentiate $Z_\L$ with respect to $s$ and evaluate at 0. We compute that
\begin{equation}
	\dfrac{d}{ds} Z_\L(s) = (2 \pi)^{-2 s} \left(-2 \log(2 \pi) \, E_\L(z,s) + \dfrac{\partial}{\partial s} E(z,s) \right).
\end{equation}
Now, the following result is a consequence of Kronecker's limit formula.
\begin{align}
	E_\L(z,0) & = -1,\\
	\left. \dfrac{\partial}{\partial s} E_\L \right\rvert_{s=0} & = - 2 \log\left( 2 \pi \, y^{1/2} | \eta(z) |^2 \right).
\end{align}
The Dedekind eta function $\eta(\tau)$ is defined in the upper half plane $\mathbb{H}$ by the infinite product
\begin{equation}\label{eq_eta}
	\eta(\tau) = e^{\pi i \tau/12} \prod_{k=1}^\infty (1-e^{2 \pi i k \tau}).
\end{equation}
Hence,
\begin{equation}
	\left. \dfrac{d}{ds} Z_\L \right\rvert_{s=0} = - 2 \log\left( y^{1/2} | \eta(z) |^2 \right).
\end{equation}
It follows that the (zeta regularized) determinant of the Laplace--Beltrami operator $\Delta_\L$, with $\L = y^{-1/2} \langle 1, z \rangle_\Z$, $z = x + i y$, $y>0$, is given by
\begin{equation}
	\detp \Delta_\L = y |\eta(z)|^4.
\end{equation}

\subsection{Rectangular Tori}
We have seen that maximizing the determinant $\detp \Delta_\L$, where the lattice is given by $\L = y^{-1/2} \langle 1, z \rangle_\Z$ corresponds to maximizing $y |\eta(z)|^4$. In the case of a rectangular torus $\C/(y^{-1/2} \Z \times i \, y^{1/2} \Z)$, the problem in focus is
\begin{equation}
	\textnormal{maximize} \quad y |\eta(i y)|^4, \qquad y \in \R_+.
\end{equation}
We could end this work now with the fact, already observed by Baernstein and Vinson \cite{BaernsteinVinson_Local_1998}, that maximizing $\detp \Delta_\L$ is implied by minimizing $\tr(e^{t \Delta_\L})$. Montgomery showed that the unique minimizer of $\tr(e^{t \Delta_\L})$ is the hexagonal lattice \cite{Montgomery_Theta_1988}. In a recent work on Gaussian Gabor frames, the results of Faulhuber and Steinerberger \cite{FaulhuberSteinerberger_Theta_2017} imply that the square lattice is the unique minimizer of $\tr(e^{t \Delta_\L})$ within the class of rectangluar lattices. Hence, Theorem \ref{thm_main} is implied by the results in \cite{FaulhuberSteinerberger_Theta_2017}. However, we will come up with a proof independent from the results in \cite{FaulhuberSteinerberger_Theta_2017} and \cite{Montgomery_Theta_1988}, but, however, the techniques are similar. The rest of this work is meant to give a deeper insight into the mentioned problems and maybe these insights can be valuable for some of the related problems mentioned in the introduction.

\section{Jacobi's Theta Functions}\label{sec_Jacobi}
In this section we study properties of Jacobi's theta functions which will lead to a deeper understanding of Theorem \ref{thm_main}. We start by defining the theta function in accordance with the textbook of Stein and Shakarchi \cite{SteSha_Complex_03}.
\begin{definition}
	For $z \in \C$ and $\tau \in \mathbb{H}$ (the upper half plane) we define the theta function as
	\begin{equation}\label{eq_Theta}
		\Theta(z, \tau) = \sum_{k \in \Z} e^{\pi i k^2 \tau} e^{2 k \pi i z}.
	\end{equation}
\end{definition}
This function is an entire function with respect to $z$ and holomorphic with respect to $\tau$. As stated in \cite{SteSha_Complex_03}, the function arises in many different fields of mathematics, such as the theory of elliptic functions, the theory of modular functions, as a fundamental solution of the heat equation on the torus as well as in the study of Riemann's zeta function. Also, it is used to prove results in combinatorics and number theory.

The function can also be expressed as an infinite product.

\begin{proposition}[Jacobi triple product]\label{prop_triple_product}\index{Jacobi triple product}
For $z \in \C$ and $\tau \in \mathbb{H}$ we have
	\begin{equation}\label{eq_triple_product}
		\Theta(z,\tau) = \prod_{k \geq 1} \left(1 - e^{2 k \pi i \tau}\right) \left(1 + e^{(2k-1) \pi i \tau} e^{2\pi i z}\right) \left(1 + e^{(2k-1) \pi i \tau} e^{-2\pi i z}\right).
	\end{equation}
\end{proposition}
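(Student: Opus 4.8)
The plan is to prove the Jacobi triple product identity for $\Theta(z,\tau)$ by the classical elementary approach, treating one of the two variables as a formal parameter while keeping the other fixed. Writing $q = e^{\pi i \tau}$ (so that $|q| < 1$ since $\tau \in \mathbb{H}$) and $w = e^{2 \pi i z}$, the claim becomes the statement that
\begin{equation}
\sum_{k \in \Z} q^{k^2} w^k = \prod_{k \geq 1} (1 - q^{2k})(1 + q^{2k-1} w)(1 + q^{2k-1} w^{-1}).
\end{equation}
First I would fix $\tau$ (hence $q$) and regard both sides as functions of $w$ on the punctured plane $\C \setminus \{0\}$. The left-hand series and the right-hand product both converge absolutely and locally uniformly there, the series because $|q|<1$ forces the quadratic exponent to dominate, and the product because $\sum_k |q|^{2k-1}$ converges; so both sides are holomorphic in $w \in \C\setminus\{0\}$ and it suffices to show they agree.

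The key step is to exploit a functional equation in $w$. Denoting the product by $P(w)$, a direct reindexing of the factors shows that $P(q^2 w) = (q w)^{-1} P(w)$, because multiplying $w$ by $q^2$ shifts the exponents in the two infinite products by one step, and the telescoping of these shifts produces exactly the factor $(q w)^{-1}$. The same functional equation is satisfied by the series $S(w) = \sum_k q^{k^2} w^k$: substituting $w \mapsto q^2 w$ and completing the square in the exponent via $k^2 + 2k = (k+1)^2 - 1$ and shifting the summation index yields $S(q^2 w) = (q w)^{-1} S(w)$. Hence the quotient $F(w) = S(w)/P(w)$ is invariant under $w \mapsto q^2 w$, provided $P$ has no zeros, which I would verify by checking that none of the factors $1 + q^{2k-1} w^{\pm 1}$ vanishes away from a discrete set and arguing that $S$ vanishes precisely at those same points to the same order, so that $F$ extends to a holomorphic, nowhere-zero, doubly-quasiperiodic function. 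By Liouville's theorem applied after passing to the quotient torus (equivalently, because a holomorphic function invariant under $w \mapsto q^2 w$ and bounded on an annular fundamental domain must be constant), $F$ is a constant $c(q)$ independent of $w$.

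It remains to evaluate the constant $c(q)$, which is the one genuinely delicate point. I would do this by specializing to a convenient value of $w$: the choice $w = i$ (so that $z = 1/4$) causes the two linear factors to combine into $1 + q^{2(2k-1)}$, and comparing the resulting identity in $q$ with the $w=1$ case lets one extract a relation for $c(q)$. Carrying this through, together with the observation that $c(q) \to 1$ as $q \to 0$ (both sides tend to $1$ in that limit), pins down $c(q) = 1$ for all admissible $q$, completing the proof. The main obstacle I anticipate is precisely this determination of the constant: the quasiperiodicity argument is clean, but isolating $c(q)$ requires a careful auxiliary specialization and a limiting argument rather than a one-line substitution, and one must be attentive that the chosen value of $w$ does not accidentally make a factor vanish.
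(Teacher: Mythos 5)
The paper itself contains no proof of this proposition: it is quoted verbatim and the reader is referred to the textbook of Stein and Shakarchi. So the comparison is really with that cited proof, and your outline is essentially it: fix the nome $q=e^{\pi i \tau}$, establish the quasiperiodicity $S(q^2w)=(qw)^{-1}S(w)$ and $P(q^2w)=(qw)^{-1}P(w)$ (both of your computations here are correct), check that $S$ vanishes at the zeros of $P$, conclude that $F=S/P$ descends to a holomorphic function on the compact torus $\C^*/q^{2\Z}$ and is therefore a constant $c(q)$, and finally pin the constant down by a special evaluation combined with a limiting argument. One simplification: you do not need the zeros to match ``to the same order.'' The zeros of $P$ are all simple, since a factor $1+q^{2k-1}w$ vanishes only at $w=-q^{-(2k-1)}$ with $|w|>1$ and a factor $1+q^{2j-1}w^{-1}$ only at $w=-q^{2j-1}$ with $|w|<1$, and distinct indices give distinct moduli; so it suffices that $S$ vanish at these points, which follows from the fixed-point-free pairing $k\leftrightarrow -k-1$ in $S(-q)=\sum_k(-1)^k q^{k^2+k}=0$ together with the quasiperiodicity you already proved.

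The one genuine slip is in the determination of $c(q)$. Specializing to $w=i$ is the right move, but the resulting identity must be compared with the case $w=-1$ (that is, $z=\tfrac12$), not $w=1$: at $w=i$ the series collapses to $\sum_k(-1)^k q^{4k^2}$, which is the series at $w=-1$ in the nome $q^4$, and the product collapses, via $\prod_{k\geq 1}\bigl(1-q^{2k}\bigr)\bigl(1+q^{4k-2}\bigr)=\prod_{k\geq 1}\bigl(1-q^{8k}\bigr)\bigl(1-q^{8k-4}\bigr)^2$, to the product at $w=-1$ in the nome $q^4$. The comparison therefore yields the self-similarity $c(q)=c(q^4)$, not a direct evaluation. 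This is exactly what makes your limiting observation usable: the statement $c(q)\to 1$ as $q\to 0$ says nothing about a fixed $q$ by itself, but iterating gives $c(q)=c\bigl(q^{4^k}\bigr)$ for all $k$, and since $q^{4^k}\to 0$ while both series and product tend to $1$ in that limit, one concludes $c(q)=1$ for every admissible $q$. With the comparison point corrected from $w=1$ to $w=-1$, your argument is complete and coincides with the proof in the reference the paper cites.
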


It also fulfills the following identity.

\begin{theorem}\label{thm_Theta}
	For $z \in \C$ and $\tau \in \mathbb{H}$ we have
	\begin{equation}
		\Theta\left(z, -\tfrac{1}{\tau}\right) = (-i \tau)^{1/2} e^{\pi i z^2 \tau} \Theta(\tau z, \tau).
	\end{equation}
\end{theorem}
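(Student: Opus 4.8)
The plan is to reduce the identity to the Poisson summation formula, which is the standard route for theta transformation formulas and is consistent with the conventions of \cite{SteSha03}. First I would fix an arbitrary $z \in \C$ and restrict attention to the imaginary axis, writing $\tau = it$ with $t \in \R_+$. On this ray one has $-\tfrac{1}{\tau} = \tfrac{i}{t}$, and the weight $e^{\pi i k^2 \tau}$ in \eqref{eq_Theta} becomes the Gaussian $e^{-\pi t k^2}$, so that $\Theta(z, it) = \sum_{k \in \Z} f(k)$ with $f(x) = e^{-\pi t x^2} e^{2\pi i x z}$. For every fixed $z \in \C$ this $f$ is a Schwartz function of the real variable $x$ (the Gaussian dominates the exponential growth coming from $Im(z)$), so Poisson summation applies.

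The key computational step is the Fourier transform of $f$. Completing the square gives the Gaussian identity
\[
	\int_{\R} e^{-\pi t x^2} e^{2\pi i x w} \, dx = t^{-1/2} e^{-\pi w^2 / t}, \qquad t \in \R_+, \ w \in \C,
\]
valid for complex $w$ after a contour shift, whence $\hat f(\xi) = t^{-1/2} e^{-\pi (z - \xi)^2/t}$. Poisson summation then yields
\[
	\Theta(z, it) = \sum_{m \in \Z} \hat f(m) = t^{-1/2} e^{-\pi z^2/t} \sum_{m \in \Z} e^{-\pi m^2/t} e^{2\pi m z/t}.
\]
Recognising the remaining sum as $\Theta\left(\tfrac{z}{it}, \tfrac{i}{t}\right)$ and then solving for the theta function with parameter $\tfrac{i}{t}$ (equivalently, substituting $z \mapsto itz$) produces exactly the claimed relation on the imaginary axis, namely $\Theta\left(z, \tfrac{i}{t}\right) = t^{1/2} e^{-\pi t z^2} \Theta(itz, it)$; this is the asserted formula with $\tau = it$, since there $(-i\tau)^{1/2} = t^{1/2}$ and $e^{\pi i z^2 \tau} = e^{-\pi t z^2}$.

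Finally I would extend from the ray to the whole upper half plane by analytic continuation. For $\tau \in \mathbb{H}$ one has $-\tfrac{1}{\tau} \in \mathbb{H}$, so $\tau \mapsto \Theta\left(z, -\tfrac{1}{\tau}\right)$ is holomorphic; on the right-hand side $-i\tau$ lies in the right half plane, where the principal branch of the square root is holomorphic and reduces to the positive value $t^{1/2}$ when $\tau = it$. Hence, for fixed $z$, both sides are holomorphic functions of $\tau \in \mathbb{H}$ that coincide on the ray $\{it : t \in \R_+\}$, which has accumulation points in $\mathbb{H}$, and the identity theorem forces equality throughout $\mathbb{H}$. The main obstacle I anticipate is bookkeeping rather than conceptual: one must justify Poisson summation and the Gaussian transform for complex $z$, and, more delicately, fix the branch of $(-i\tau)^{1/2}$ consistently so that the continuation of the elementary positive square root on the imaginary axis matches the principal branch used in the statement.
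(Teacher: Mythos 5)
Your proposal is correct, and it is essentially the standard argument: the paper itself gives no proof of Theorem \ref{thm_Theta}, deferring instead to the textbook of Stein and Shakarchi \cite{SteSha03}, where the transformation law is established by exactly your route --- Poisson summation applied to the Gaussian $e^{-\pi t x^2}e^{2\pi i x z}$ on the ray $\tau = it$, followed by analytic continuation in $\tau$ via the identity theorem, with the principal branch of $(-i\tau)^{1/2}$ on the right half plane. Your bookkeeping (the complex-shifted Gaussian transform, the substitution $z \mapsto i t z$, and the branch consistency) is accurate, so the proposal stands as a complete proof matching the cited reference.
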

For a proof of Theorem \ref{thm_Theta} and more details on theta functions of two complex variables as well as the product representation we refer to the textbook of Stein and Shakarchi \cite{SteSha_Complex_03}.

Whereas derivatives of $\Theta$ with respect to $z$ are often studied, it seems that studies of its derivatives with respect to $\tau$ are less common. The following Lemma contains a symmetry result for the logarithmic derivative of $\Theta$ with respect to $\tau$.
\begin{lemma}
	For $z \in \C$ and $\tau \in \mathbb{H}$ we have
	\begin{equation}\label{lem_identity_Theta}
		\pi i z^2 \tau + \tau \frac{z \, \partial_z \Theta(\tau z, \tau) + \partial_\tau \Theta(\tau z, \tau)}{\Theta(\tau z, \tau)} - \frac{1}{\tau} \frac{\partial_\tau \Theta\left(z, -\tfrac{1}{\tau}\right)}{\Theta\left(z, -\tfrac{1}{\tau}\right)} = -\frac{1}{2}.
	\end{equation}
	In particular, for $z = 0$ we get
	\begin{equation}
		\tau \frac{\partial_\tau \Theta(0, \tau)}{\Theta(0, \tau)} - \frac{1}{\tau} \frac{\partial_\tau \Theta\left(0, -\tfrac{1}{\tau}\right)}{\Theta\left(0, -\tfrac{1}{\tau}\right)} = -\frac{1}{2}.
	\end{equation}
\end{lemma}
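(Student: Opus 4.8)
The plan is to derive the identity by differentiating the modular transformation of Theorem \ref{thm_Theta} with respect to $\tau$ and then dividing by $\Theta(z,-1/\tau)$. This is the natural move, since the claimed identity is visibly an equation between logarithmic derivatives of $\Theta$ in its second argument. To sidestep any branch ambiguity of $\log$ (the theta function has zeros), I would differentiate the product form of the transformation directly rather than its logarithm. Concretely, write
\begin{equation}
	\Theta\left(z,-\tfrac{1}{\tau}\right) = (-i\tau)^{1/2}\, e^{\pi i z^2 \tau}\, \Theta(\tau z, \tau),
\end{equation}
regard $z$ as a fixed parameter, and differentiate both sides with respect to $\tau$.

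On the left-hand side the chain rule acts through the inner function $-1/\tau$, whose derivative is $\tau^{-2}$, so that $\frac{d}{d\tau}\Theta(z,-1/\tau) = \tau^{-2}\,\partial_\tau\Theta(z,-1/\tau)$, where $\partial_\tau$ always denotes differentiation in the \emph{second} slot. On the right-hand side I apply the product rule to the three factors: the automorphy factor $(-i\tau)^{1/2}$ has logarithmic derivative $\tfrac{1}{2\tau}$, the Gaussian $e^{\pi i z^2 \tau}$ contributes $\pi i z^2$, and the theta factor $\Theta(\tau z,\tau)$ must be differentiated along the curve $\tau\mapsto(\tau z,\tau)$, producing $z\,\partial_z\Theta(\tau z,\tau)+\partial_\tau\Theta(\tau z,\tau)$ by the chain rule in both slots simultaneously.

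Next I would divide the differentiated equation by the transformation itself, i.e. by $\Theta(z,-1/\tau)=(-i\tau)^{1/2}e^{\pi i z^2\tau}\Theta(\tau z,\tau)$. The prefactor and the Gaussian cancel, leaving the logarithmic derivative $\tau^{-2}\,\partial_\tau\Theta(z,-1/\tau)/\Theta(z,-1/\tau)$ on the left and $\tfrac{1}{2\tau}+\pi i z^2+(z\,\partial_z\Theta+\partial_\tau\Theta)/\Theta$ evaluated at $(\tau z,\tau)$ on the right. Multiplying through by $\tau$ and collecting terms yields the stated identity, the constant $-\tfrac12$ arising precisely from the logarithmic derivative $\tfrac{1}{2\tau}$ of the automorphy factor $(-i\tau)^{1/2}$. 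Setting $z=0$ then annihilates both the $\pi i z^2$ term and the $z\,\partial_z\Theta$ contribution (the latter because it carries an explicit factor $z$), giving the specialized identity at once.

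I expect the computation to be essentially routine once the set-up is fixed; the only genuine pitfall is bookkeeping—keeping straight when $\partial_\tau$ means differentiation of $\Theta$ in its second argument as opposed to differentiation of a composite expression in the external variable $\tau$, and correctly tracking the powers of $\tau$ produced by the inner derivative $\frac{d}{d\tau}(-1/\tau)=\tau^{-2}$ and by the concluding multiplication by $\tau$. A secondary point worth verifying is that $\Theta(\tau z,\tau)$ and $\Theta(z,-1/\tau)$ are nonzero on the relevant region, so that the division (equivalently, the logarithmic derivatives) is legitimate; this holds away from the zero set, and since both sides are meromorphic in $(z,\tau)$ the resulting identity extends to all admissible arguments by analytic continuation.
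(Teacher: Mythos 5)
Your method is exactly the paper's: the paper takes logarithms of the transformation law of Theorem \ref{thm_Theta}, differentiates in $\tau$, and multiplies by $\tau$, while you differentiate the product form directly and then divide by it. That is a cosmetic difference (and in fact a clean way to sidestep the branch ambiguity of $\log$ near the zeros of $\Theta$, which the paper's one-line proof quietly ignores), so in substance the two proofs coincide.

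One discrepancy is worth flagging, however. Carried out exactly as you describe, your computation does \emph{not} produce the identity as printed in the lemma. The logarithmic $\tau$-derivative of the Gaussian $e^{\pi i z^2 \tau}$ is $\pi i z^2$, and the concluding multiplication by $\tau$ turns it into $\pi i z^2 \tau$ --- just as it turns the automorphy contribution $\tfrac{1}{2\tau}$ into the constant $\tfrac{1}{2}$ and places the factor $\tau$ in front of the theta quotient. So the identity your argument actually proves is
\begin{equation}
	\pi i z^2 \tau + \tau \frac{z \, \partial_z \Theta(\tau z, \tau) + \partial_\tau \Theta(\tau z, \tau)}{\Theta(\tau z, \tau)} - \frac{1}{\tau} \frac{\partial_\tau \Theta\left(z, -\tfrac{1}{\tau}\right)}{\Theta\left(z, -\tfrac{1}{\tau}\right)} = -\frac{1}{2},
\end{equation}
with first term $\pi i z^2 \tau$ rather than $\pi i z^2$. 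The two versions differ by $\pi i z^2(\tau-1)$, so the lemma as printed is false for $z \neq 0$, $\tau \neq 1$; this is a typo in the paper, which your (correct) bookkeeping exposes, and it is immaterial for the specialization $z = 0$ that the paper actually uses. Your assertion that multiplying through by $\tau$ ``yields the stated identity'' glosses over this mismatch --- either you dropped the $\tau$ on that one term, or you did not check the final collection of terms against the statement. Apart from that, the proof is sound, including your closing remark that nonvanishing of $\Theta$ is only needed off its zero set and the identity of meromorphic functions extends from there.
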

\begin{proof}
	We start by taking the logarithm on both sides of the identity in Theorem \ref{thm_Theta}
	\begin{equation}
		\log\left(\Theta\left(z, -\tfrac{1}{\tau}\right)\right) = \frac{1}{2} \log(-i \tau) + \pi i z^2 \tau +\log\left(\Theta(\tau z, \tau)\right).
	\end{equation}
	Differentiating with respect to $\tau$ on both sides and a multiplication by $\tau$ yields, after rearranging the terms, the desired result.
\end{proof}
As a next step, we define Jacobi's theta functions of two variables which we will then restrict to certain domains. For $z \in \C$ and $\tau \in \mathbb{H}$ we define
\begin{align}
	\vartheta_1(z,\tau) & = -i \sum_{k \in \Z} (-1)^{k} e^{\pi i (k+1/2)^2 \tau} e^{(2k+1)\pi i z}\\
		& = 2 \sum_{k \in \N} (-1)^{k} e^{\pi i (k+1/2)^2 \tau} \sin((2k+1)\pi z)\\
	\vartheta_2(z,\tau) & = \sum_{k \in \Z} e^{\pi i (k-1/2)^2 \tau} e^{(2k+1)\pi i z}\\
		& = 2 \sum_{k \in \N} e^{\pi i (k-1/2)^2 \tau} \cos((2k+1)\pi z)\\
	\vartheta_3(z,\tau) & = \sum_{k \in \Z} e^{\pi i k^2 \tau} e^{2 k \pi i z}\\
		& = 1 + 2 \sum_{k \in \N} e^{\pi i k^2 \tau} \cos(2 k \pi z)\\
	\vartheta_4(z,\tau) & = \sum_{k \in \Z} (-1)^k e^{\pi i k^2 \tau} e^{2 k \pi i z}\\
		& = 1 + 2 \sum_{k \in \N} (-1)^k e^{\pi i k^2 \tau} \cos(2 k \pi z).
\end{align}
We note that $\vartheta_3(z,\tau) = \Theta(z,\tau)$ and that any $\vartheta_j$ can be expressed via $\Theta$ from equation \eqref{eq_Theta}. The functions which we will study in the rest of this work are restrictions of the above functions with $(z,\tau) = (0,i x)$, $x \in\R_+$. In particular, all these functions are real-valued.
\begin{definition}\label{def_Jacobi_theta}
	For $x \in \R_+$ we define the real-valued theta functions in the following way.
	\begin{IEEEeqnarray}{rClClCl}
		\theta_2(x) = \vartheta_2(0,i x) & = & \sum_{k \in \Z} e^{-\pi (k-1/2)^2 x} & = & & & 2 \sum_{k \in \N} e^{-\pi (k-1/2)^2 x}\label{eq_def_theta_2},\\
		\theta_3(x) = \vartheta_3(0,i x)& = & \sum_{k \in \Z} e^{-\pi k^2 x} & = & 1 & + & 2 \sum_{k \in \N} e^{-\pi k^2 x}\label{eq_def_theta_3},\\
		\theta_4(x) = \vartheta_4(0,i x)& = & \sum_{k \in \Z} (-1)^k e^{-\pi k^2 x} & = & 1 & + & 2\sum_{k \in \N} (-1)^k e^{-\pi k^2 x}\label{eq_def_theta_4}.
	\end{IEEEeqnarray}
\end{definition}
It does not make much sense to study properties of $\theta_1(x)$ if defined as above, because $\vartheta_1(0,i x) = 0$ for all $x \in \R_+$. However, we will see that $\theta_1$ is involved in the proof of our main result in some sense.

All of the above functions can also be expressed by infinite products.
\begin{align}
	\theta_2(x) & = 2 e^{-\tfrac{\pi}{4} x} \prod_{k \in \N} \left(1 - e^{-2 k \pi x}\right)\left(1 + e^{-2 k \pi x}\right)^2\\
	\theta_3(x) & = \prod_{k \in \N} \left(1 - e^{-2 k \pi x}\right)\left(1 + e^{-(2 k - 1) \pi x}\right)^2\\
	\theta_4(x) & = \prod_{k \in \N} \left(1 - e^{-2 k \pi x}\right)\left(1 - e^{-(2 k - 1) \pi x}\right)^2
\end{align}
These representations can be quite useful when studying the logarithmic derivatives of these functions.
We note that for $z \in \R$ and purely imaginary $\tau = i x$, $x \in \R_+$ the theta function $\Theta(z, ix)$ is maximal for $z \in \Z$ and minimal for $z \in \Z + \frac{1}{2}$. These special cases correspond to Jacobi's $\theta_3$ and $\theta_4$ function
\begin{align}
	\theta_3(x) &= \Theta\left( 0,ix \right),\\
	\theta_4(x) &= \Theta\left( \tfrac{1}{2}, ix \right).
\end{align}
For $x \in \R_+$, the $\theta_2$-function can be expressed via $\Theta$ in the following way.
\begin{equation}
	\theta_2(x) = e^{-\pi \frac{x}{4}} \, \Theta \left( \tfrac{i x}{2},ix \right).
\end{equation}

A recurring theme will be the frequent use of the differential operator $x \, \frac{d}{dx}$. We will use the notation $\left(x \frac{d}{dx}\right)^n$ for its iterated repetition, i.e.,
\begin{equation}
	\left(x \frac{d}{dx}\right)^n = x \frac{d}{dx} \left(x \frac{d}{dx}\right)^{n-1}.
\end{equation}
In particular, we will study properties of the logarithmic derivative of a certain function $f: \R_+ \to \R_+$, on a logarithmic scale. Let us explain how this statement should be interpreted. By using the variable transformation $y = \log(x)$ we extend the domain from $\R_+$ to $\R$ and consider the new function $\log\left(f \left( e^y \right)\right)$. Therefore, we get an extra exponential factor each time we take a (logarithmic) derivative. We have
\begin{equation}
	\frac{d}{dy} \log\left(f \left( e^y \right)\right) = e^y \frac{f'\left(e^y\right)}{f\left(e^y\right)}.
\end{equation}
By reversing the transformation of variables, we come back to the original scale, but the factor $x$ stays. Without being explicitly mentioned, these methods were used in \cite{Faulhuber_Hexagonal_2018}, \cite{FaulhuberSteinerberger_Theta_2017}, \cite{Montgomery_Theta_1988} to establish uniqueness results about extremal theta functions on lattices. We will now provide a new point of view on as well as new results related to the mentioned articles.

For what follows it is necessary to clarify the notation of the Fourier transform and Poisson's summation formula which are given by
\begin{equation}\label{eq_FT}
	\widehat{f}(\omega) = \int_{\R} f(x) e^{-2 \pi i \omega x} \, dx, \qquad x,\omega \in \R
\end{equation}
and
\begin{equation}\label{eq_Poisson}
	\sum_{k \in \Z} f(k+x) = \sum_{l \in \Z} \widehat{f}(l) e^{2 \pi i l x}, \qquad x \in \R.
\end{equation}
respectively. Both formulas certainly hold for Schwartz functions and since we will apply both only on Gaussians we do not have to worry about more general properties for the formulas to hold.

As a consequence of the Poisson summation formula we find the following, well-known identities
\begin{equation}\label{eq_Poisson_theta_3}
	\sqrt{x} \, \theta_3 \left( x \right) = \theta_3 \left( \tfrac{1}{x} \right),
\end{equation}
\begin{equation}\label{eq_Poisson_theta_2_4}
	\sqrt{x} \, \theta_2 \left( x \right) = \theta_4 \left( \tfrac{1}{x} \right) \quad \text{ and } \quad
	\sqrt{x} \, \theta_4 \left( x \right) = \theta_2 \left( \tfrac{1}{x} \right).
\end{equation}

We note the common theme in the last identities which leads to the following lemma.
\begin{lemma}\label{lem_identity}
	Let $r \in \R$ and suppose that $f,g \in C^1\left( \R_+ , \R \right)$ do not possess zeros. If $f$ and $g$ satisfy the (generalized Jacobi) identity
	\begin{equation}\label{eq_Jacobi_general}
		x^r \, f\left( x \right) = g\left( \tfrac{1}{x} \right),
	\end{equation}
	then
	\begin{equation}\label{eq_identity}
		x \, \frac{f' \left( x \right)}{f \left( x \right)} + \tfrac{1}{x} \, \frac{g' \left( \tfrac{1}{x} \right)}{g \left( \tfrac{1}{x} \right)} = -r.
	\end{equation}
\end{lemma}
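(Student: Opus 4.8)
The plan is to obtain \eqref{eq_identity} by differentiating the generalized Jacobi identity \eqref{eq_Jacobi_general} on a logarithmic scale and then clearing denominators with a factor of $x$. Because $f$ and $g$ lie in $C^1(\R_+)$ and never vanish, and $\R_+$ is connected, each of them has constant sign; in particular the logarithmic derivatives $f'/f$ and $g'/g$ are well-defined, continuous, and equal to $\frac{d}{dx}\log|f|$ and $\frac{d}{dx}\log|g|$ respectively. This is what makes the logarithmic differentiation below legitimate irrespective of the signs of $f$ and $g$.

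First I would take $\log|\cdot|$ of both sides of \eqref{eq_Jacobi_general}, turning it into $r\log x + \log|f(x)| = \log|g(1/x)|$. Differentiating in $x$, the left-hand side becomes $\frac{r}{x} + \frac{f'(x)}{f(x)}$, while the chain rule applied to the right-hand side produces the inner derivative $\frac{d}{dx}(1/x) = -1/x^2$ and hence $-\frac{1}{x^2}\frac{g'(1/x)}{g(1/x)}$. Equating these and multiplying through by $x$ gives $r + x\frac{f'(x)}{f(x)} = -\frac{1}{x}\frac{g'(1/x)}{g(1/x)}$, which is exactly \eqref{eq_identity} after moving $r$ to the right-hand side.

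There is no real obstacle here: the statement is a short computation, and the only point deserving attention is the factor $-1/x^2$ coming from the argument $1/x$ of $g$, since it is precisely this factor that, after multiplication by $x$, symmetrizes the two terms into $x\,f'/f$ and $\tfrac1x\,g'/g$. If one prefers to avoid logarithms of possibly negative quantities altogether, an equivalent route is to differentiate \eqref{eq_Jacobi_general} directly, yielding $r x^{r-1}f(x) + x^r f'(x) = -x^{-2}g'(1/x)$, and then divide by the common value $x^r f(x) = g(1/x)$; this delivers \eqref{eq_identity} with no appeal to the logarithm.
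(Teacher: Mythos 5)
Your proposal is correct and follows essentially the same route as the paper: take logarithms of the identity, differentiate, and multiply by $x$; your use of $\log|\cdot|$ to handle signs is a cosmetic variant of the paper's observation that $f$ and $g$ share a constant sign, so one may assume both positive without loss of generality.
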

\begin{proof}
	Both, $f$ and $g$ are either positive or negative on $\R_+$ since they are real-valued, continuous and do not contain zeros. As they also fulfill the identity $x^r \, f(x) = g\left(\tfrac{1}{x}\right)$ both of them possess the same sign. Without loss of generality we may therefore assume that both functions are positive because otherwise we change the sign which does not affect the results. Therefore, we have
	\begin{equation}
		\log \left( x^r \, f(x) \right) = \log \left( g \left( \tfrac{1}{x} \right) \right).
	\end{equation}
	Using the differential operator $x \frac{d}{dx}$ on both sides directly yields
	\begin{equation}
		r + x \, \frac{f'(x)}{f(x)} = -\tfrac{1}{x} \, \frac{g'\left(\tfrac{1}{x}\right)}{g\left(\tfrac{1}{x}\right)}.
	\end{equation}
\end{proof}

We remark that an alternative proof of Lemma \ref{lem_identity} is given in \cite{FaulhuberSteinerberger_Theta_2017} for the special case $f = g = \theta_3$ and $r = \tfrac{1}{2}$ (a generalization of the proof in \cite{FaulhuberSteinerberger_Theta_2017} is possible with the arguments given there). With this proof, the result
\begin{equation}\label{eq_identity_theta_3}
	x \, \frac{\theta_3'(x)}{\theta_3(x)} + \tfrac{1}{x} \, \frac{\theta_3' \left( \tfrac{1}{x} \right)}{\theta_3 \left( \tfrac{1}{x} \right)} = -\frac{1}{2}
\end{equation}
was derived.

However, it seemed to have gone unnoticed that the result can be put into a more general context. With the previous lemma we also get the results
\begin{equation}\label{eq_identity_theta_2_4}
	x \, \frac{\theta_2'(x)}{\theta_2(x)} + \tfrac{1}{x} \, \frac{\theta_4' \left( \tfrac{1}{x} \right)}{\theta_4 \left( \tfrac{1}{x} \right)} = -\frac{1}{2} \qquad \text{ and } \qquad
	x \, \frac{\theta_4'(x)}{\theta_4(x)} + \tfrac{1}{x} \, \frac{\theta_2' \left( \tfrac{1}{x} \right)}{\theta_2 \left( \tfrac{1}{x} \right)} = -\frac{1}{2},
\end{equation}
which were not given in \cite{FaulhuberSteinerberger_Theta_2017}. Also, the author just learned that the above formulas can also be found in the monograph of Borwein and Borwein \cite[chap.~2.3]{Borwein_AGM_1987}. All of the above results actually contain symmetry statements about logarithmic derivatives of Jacobi's theta functions on a logarithmic scale. We note that the identities \eqref{eq_identity_theta_3} and \eqref{eq_identity_theta_2_4} are special cases of Lemma \ref{lem_identity_Theta} when $\Theta$ is restricted to certain rays in $\C \times \mathbb{H}$.

\subsection{Properties of Theta-2 and Theta-4}\label{sec_Theta_2_4}

We start with monotonicity properties of the logarithmic derivatives of Jacobi's $\theta_2$ and $\theta_4$ functions on a logarithmic scale. The following result was already given in \cite{FaulhuberSteinerberger_Theta_2017} and the proof can be established by using the product representation of $\theta_4$.
\begin{proposition}\label{prop_theta4_monotone}
	The function $x \frac{\theta'_4(x)}{\theta_4(x)}$ is strictly decreasing on $\R_+$.
\end{proposition}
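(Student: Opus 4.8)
The plan is to work directly from the product representation of $\theta_4$ recorded above, take logarithms, and apply the operator $x\frac{d}{dx}$ termwise, reducing the whole question to the monotonicity of a single elementary function. Reindexing the product over all positive integers $n$ (the factor $1-e^{-2k\pi x}$ contributing $n=2k$ with multiplicity one, and $1-e^{-(2k-1)\pi x}$ contributing $n=2k-1$ with multiplicity two), I would write
\begin{equation}
	\log \theta_4(x) = \sum_{n \geq 1} c_n \log\left(1 - e^{-n \pi x}\right), \qquad c_n = \begin{cases} 1, & n \text{ even},\\ 2, & n \text{ odd}, \end{cases}
\end{equation}
where all $c_n > 0$. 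Since the summands and all their derivatives decay exponentially in $n$, the series converges uniformly on compact subsets of $\R_+$, so differentiation term by term is justified. Applying $x\frac{d}{dx}$ and setting $t = n\pi x$ gives
\begin{equation}
	x \frac{\theta_4'(x)}{\theta_4(x)} = \sum_{n \geq 1} c_n \, \frac{n \pi x \, e^{-n \pi x}}{1 - e^{-n \pi x}} = \sum_{n \geq 1} c_n \, \phi(n \pi x), \qquad \phi(t) := \frac{t}{e^t - 1}.
\end{equation}

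The crux is therefore to show that the universal function $\phi(t) = t/(e^t-1)$ is \emph{strictly decreasing} on $\R_+$. This I would do by a direct sign analysis of the numerator of $\phi'$. Computing $\phi'(t) = \tfrac{(e^t-1) - t e^t}{(e^t-1)^2}$, everything rests on the sign of $N(t) := e^t(1-t) - 1$. One checks $N(0) = 0$ and $N'(t) = -t\,e^t < 0$ for $t > 0$, so $N$ is strictly decreasing and hence $N(t) < 0$ on $\R_+$; thus $\phi'(t) < 0$ there. Since $x \mapsto n\pi x$ is increasing, each composite $x \mapsto \phi(n\pi x)$ is strictly decreasing in $x$.

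Finally, as $x \frac{\theta_4'(x)}{\theta_4(x)}$ is a positive linear combination (coefficients $c_n > 0$) of strictly decreasing functions whose sum converges, it is itself strictly decreasing on $\R_+$, which is the assertion. I do not expect a serious obstacle here: the only points requiring care are the justification of termwise differentiation (handled by the exponential decay in $n$) and the observation that every factor of the product collapses to the \emph{same} function $\phi(t)=t/(e^t-1)$ regardless of the coefficient $c_n$, so the combinatorial structure of the product is irrelevant to monotonicity. The elementary convexity-type estimate showing $\phi' < 0$ is the heart of the argument, and it is genuinely easy.
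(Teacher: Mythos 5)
Your proof is correct and is precisely the argument the paper has in mind: the paper does not prove this proposition itself but refers to Faulhuber--Steinerberger, noting only that ``the proof can be established by using the product representation of $\theta_4$,'' which is exactly your route. Taking logarithms of the product, differentiating termwise, and reducing everything to the strict decrease of $\phi(t)=t/(e^t-1)$ (with the positive weights $c_n$ playing no role) is the standard proof, and your sign analysis of $\phi'$ and the justification of termwise differentiation are both sound.
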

As a consequence of Proposition \ref{prop_theta4_monotone} we derive the following result.
\begin{proposition}\label{prop_theta4_log_concave}
	The Jacobi theta function $\theta_4$ is strictly logarithmically concave or, equivalently, $\frac{\theta_4'(x)}{\theta_4(x)}$ is strictly decreasing. Also, the expression $\frac{\theta_4'(x)}{\theta_4(x)}$ is positive.
\end{proposition}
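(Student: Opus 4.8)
The plan is to derive both conclusions from Proposition~\ref{prop_theta4_monotone}. Set $h(x) := \frac{\theta_4'(x)}{\theta_4(x)} = (\log\theta_4)'(x)$ and $g(x) := x\,h(x)$; Proposition~\ref{prop_theta4_monotone} asserts that $g$ is strictly decreasing on $\R_+$. The two claims to prove are precisely that $h$ is positive and that $h$ is strictly decreasing, the latter being the definition of strict logarithmic concavity since $(\log\theta_4)'' = h'$.

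First I would establish the positivity of $h$ directly from the product representation of $\theta_4$. Writing $q = e^{-\pi x} \in (0,1)$, we have $\theta_4(x) = \prod_{k \geq 1}(1-q^{2k})(1-q^{2k-1})^2$, and since every factor lies in $(0,1)$ this in particular re-proves $\theta_4(x) > 0$. Taking logarithms gives $\log\theta_4(x) = \sum_{k\ge1}\bigl(\log(1-q^{2k}) + 2\log(1-q^{2k-1})\bigr)$, and differentiating term by term (legitimate by local uniform convergence) together with the elementary identity $\frac{d}{dx}\log(1-q^m) = \frac{\pi m\,q^m}{1-q^m} > 0$ shows that every summand is positive; hence $h(x) = (\log\theta_4)'(x) > 0$ for all $x \in \R_+$. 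Alternatively, one may observe that $g(x) = x\,h(x)$ tends to $0$ as $x \to \infty$ (because $\theta_4(x)\to 1$ and $x\,\theta_4'(x)\to 0$); being strictly decreasing with infimum $0$, $g$ must be strictly positive, which again yields $h > 0$.

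With positivity secured, the logarithmic concavity follows from a short sign computation. From $g = x\,h$ we get $g'(x) = h(x) + x\,h'(x)$, and strict monotonicity of $g$ gives $g'(x) < 0$; therefore $x\,h'(x) = g'(x) - h(x) < 0$, where we used both $g'(x) < 0$ and $h(x) > 0$. Since $x > 0$ this forces $h'(x) < 0$ for all $x \in \R_+$, i.e. $\frac{\theta_4'}{\theta_4}$ is strictly decreasing and $\theta_4$ is strictly logarithmically concave. The one genuinely delicate point is the positivity of $h$: the weighted statement that $x\,h(x)$ decreases does not on its own pin down the sign of $h'$, and it is exactly the positivity of $h$ that upgrades ``$x\,h$ decreasing'' to ``$h$ decreasing'' in the last line. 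This is the crux, and the product representation disposes of it cleanly.
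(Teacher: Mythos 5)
Your proof is correct and follows essentially the same route as the paper: both deduce $h'<0$ from $g'=h+xh'<0$ together with $h>0$, where $g(x)=x\,\theta_4'(x)/\theta_4(x)$ is strictly decreasing by Proposition~\ref{prop_theta4_monotone}. Your ``alternative'' positivity argument (that $g$ is strictly decreasing with limit $0$ at infinity, hence positive) is exactly the paper's argument, while your primary route via the product representation is a valid, slightly more self-contained variant of that one step.
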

\begin{proof}
	The statement that $x \frac{\theta'_4(x)}{\theta_4(x)}$ is strictly decreasing on $\R_+$ was already proved in \cite{FaulhuberSteinerberger_Theta_2017}. We observe that $\lim_{x \to \infty} x \frac{\theta_4'(x)}{\theta_4(x)} = 0$, hence, $x \frac{\theta_4'(x)}{\theta_4(x)} > 0$ and, therefore, $\frac{\theta_4'(x)}{\theta_4(x)} > 0$. Also,
	\begin{equation}
		\frac{d}{dx} \left( x \frac{\theta_4'(x)}{\theta_4(x)} \right) = \frac{\theta_4'(x)}{\theta_4(x)} + x \frac{d}{dx} \left( \frac{\theta_4'(x)}{\theta_4(x)} \right) < 0,
	\end{equation}
	and it follows that
	\begin{equation}
		0 < \tfrac{1}{x} \frac{\theta_4'(x)}{\theta_4(x)} < - \frac{d}{dx} \left( \frac{\theta_4'(x)}{\theta_4(x)} \right),
	\end{equation}
	which proves that $\frac{\theta_4'(x)}{\theta_4(x)}$ is strictly decreasing, hence, $\theta_4$ is logarithmically concave.
\end{proof}

As a consequence of Proposition \ref{prop_theta4_monotone} and Lemma \ref{lem_identity} we obtain the following property of $\theta_2$, which was already claimed, but not proven, in \cite{FaulhuberSteinerberger_Theta_2017}.

\begin{proposition}\label{prop_theta2_monotone}
	The function $x \frac{\theta_2'(x)}{\theta_2(x)}$ is strictly decreasing on $\R_+$.
\end{proposition}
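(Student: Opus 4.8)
The plan is to deduce the monotonicity of $x\,\tfrac{\theta_2'(x)}{\theta_2(x)}$ directly from the corresponding statement for $\theta_4$ (Proposition \ref{prop_theta4_monotone}) by exploiting the functional equation linking $\theta_2$ and $\theta_4$. First I would invoke Lemma \ref{lem_identity} applied to the Jacobi-type identity $\sqrt{x}\,\theta_2(x) = \theta_4\!\left(\tfrac{1}{x}\right)$ from \eqref{eq_Poisson_theta_2_4}, i.e. with $f = \theta_2$, $g = \theta_4$ and $r = \tfrac{1}{2}$. Since the theta functions are strictly positive on $\R_+$, the hypotheses of the lemma are satisfied, and it yields exactly the first identity in \eqref{eq_identity_theta_2_4},
\begin{equation}
	x\,\frac{\theta_2'(x)}{\theta_2(x)} + \tfrac{1}{x}\,\frac{\theta_4'\!\left(\tfrac{1}{x}\right)}{\theta_4\!\left(\tfrac{1}{x}\right)} = -\frac{1}{2}.
\end{equation}

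The key observation is then that the second term is itself a value of the function we already control. Writing $G(x) = x\,\tfrac{\theta_4'(x)}{\theta_4(x)}$, the substitution $u = \tfrac{1}{x}$ gives $\tfrac{1}{x}\,\tfrac{\theta_4'(1/x)}{\theta_4(1/x)} = G\!\left(\tfrac{1}{x}\right)$, so the identity rearranges to
\begin{equation}
	x\,\frac{\theta_2'(x)}{\theta_2(x)} = -\frac{1}{2} - G\!\left(\tfrac{1}{x}\right).
\end{equation}
Thus the quantity of interest is, up to an additive constant, the negative of $G$ composed with the inversion $x \mapsto \tfrac{1}{x}$.

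To finish, I would argue at the level of monotonicity rather than differentiating. By Proposition \ref{prop_theta4_monotone}, $G$ is strictly decreasing on $\R_+$, and $x \mapsto \tfrac{1}{x}$ is strictly decreasing on $\R_+$; hence their composition $x \mapsto G\!\left(\tfrac{1}{x}\right)$ is strictly \emph{increasing}, and its negation is strictly decreasing. Concretely, for $0 < x_1 < x_2$ one has $\tfrac{1}{x_1} > \tfrac{1}{x_2}$, so $G\!\left(\tfrac{1}{x_1}\right) < G\!\left(\tfrac{1}{x_2}\right)$ and therefore $-\tfrac{1}{2} - G\!\left(\tfrac{1}{x_1}\right) > -\tfrac{1}{2} - G\!\left(\tfrac{1}{x_2}\right)$, which is precisely the claim.

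There is essentially no hard analytic obstacle here: the content has been front-loaded into Proposition \ref{prop_theta4_monotone} and into the symmetry relation \eqref{eq_identity_theta_2_4}, which is where the product representations and the Poisson summation identities do the real work. The only point requiring care is bookkeeping the direction of the two monotonicities — the two sign reversals (one from the inversion, one from the minus sign) must combine correctly — which is why I prefer the direct comparison of values over a derivative computation, as the latter would reintroduce the chain-rule factor $-\tfrac{1}{x^2}$ and invite a sign slip.
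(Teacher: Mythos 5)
Your proposal is correct and is essentially the paper's own proof: both rest on Proposition \ref{prop_theta4_monotone} combined with the identity $x\,\tfrac{\theta_2'(x)}{\theta_2(x)} = -\tfrac{1}{2} - \tfrac{1}{x}\,\tfrac{\theta_4'(1/x)}{\theta_4(1/x)}$ coming from Lemma \ref{lem_identity} (equivalently, the first identity in \eqref{eq_identity_theta_2_4}). The only cosmetic difference is that the paper concludes by differentiating this relation, whereas you conclude by composing monotone maps; these are the same argument.
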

\begin{proof}
	Proposition \ref{prop_theta4_monotone} tells us that $x \frac{\theta_4'(x)}{\theta_4(x)}$ is strictly decreasing on $\R_+$. By using Lemma \ref{lem_identity} or the first identity in \eqref{eq_identity_theta_2_4} we obtain
	\begin{equation}
		\frac{d}{dx} \left(x \frac{\theta_2'(x)}{\theta_2(x)}\right) =
		\frac{d}{dx} \left( - \tfrac{1}{x} \frac{\theta_4' \left( \tfrac{1}{x} \right)}{\theta_4 \left( \tfrac{1}{x} \right)} -\frac{1}{2}\right) < 0.
	\end{equation}
\end{proof}
In fact, Proposition \ref{prop_theta4_monotone} and Proposition \ref{prop_theta2_monotone} are equivalent by Lemma \ref{lem_identity}. In \cite[Lemma 6.2]{FaulhuberSteinerberger_Theta_2017} Proposition \ref{prop_theta2_monotone} was proved for $x > 1$ by directly estimating $x \frac{\theta_2'(x)}{\theta_2(x)}$, which is a hard task for small $x$. The use of Lemma \ref{lem_identity} allows us to argue directly that $x \frac{\theta_2'(x)}{\theta_2(x)}$ and $x \frac{\theta_4'(x)}{\theta_4(x)}$ must possess the same monotonicity properties.
\begin{proposition}\label{prop_theta_2_log_convex}
	The Jacobi theta function $\theta_2$ is strictly logarithmically convex or, equivalently, $\frac{\theta_2'(x)}{\theta_2(x)}$ is strictly increasing. Also, the expression $\frac{\theta_2'(x)}{\theta_2(x)}$ is negative.
\end{proposition}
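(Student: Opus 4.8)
The plan is to prove both assertions directly from the series representation \eqref{eq_def_theta_2}, rather than trying to transfer the argument that was used for $\theta_4$. The negativity of $\frac{\theta_2'(x)}{\theta_2(x)}$ is immediate: writing $a_k = \pi(k-\tfrac12)^2 > 0$, we have $\theta_2(x) = 2\sum_{k \in \N} e^{-a_k x} > 0$, while termwise differentiation gives $\theta_2'(x) = -2\sum_{k \in \N} a_k e^{-a_k x} < 0$. Hence the quotient is negative on all of $\R_+$.

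For the log-convexity I would argue that $\theta_2$ is a positive combination of decaying exponentials and is therefore automatically strictly log-convex. Concretely, strict log-convexity is equivalent to $\frac{d}{dx}\frac{\theta_2'(x)}{\theta_2(x)} = \frac{\theta_2''(x)\,\theta_2(x) - \theta_2'(x)^2}{\theta_2(x)^2} > 0$, i.e. to $\theta_2'(x)^2 < \theta_2''(x)\,\theta_2(x)$. Computing $\theta_2''(x) = 2\sum_{k\in\N} a_k^2 e^{-a_k x}$ and applying the Cauchy--Schwarz inequality to the sequences with entries $e^{-a_k x/2}$ and $a_k e^{-a_k x/2}$ yields $(\sum_k a_k e^{-a_k x})^2 \leq (\sum_k e^{-a_k x})(\sum_k a_k^2 e^{-a_k x})$, which is exactly $\theta_2'(x)^2 \leq \theta_2''(x)\,\theta_2(x)$ after multiplying through by $4$. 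The inequality is strict because equality in Cauchy--Schwarz would force $a_k$ to be constant in $k$, whereas the exponents $a_k = \pi(k-\tfrac12)^2$ are pairwise distinct. Thus $\frac{\theta_2'(x)}{\theta_2(x)}$ is strictly increasing, which is the claim.

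The main point to flag is why one cannot simply mimic the derivation of Proposition \ref{prop_theta4_log_concave} from Proposition \ref{prop_theta4_monotone}. There, log-concavity of $\theta_4$ was squeezed out of the monotonicity of $x\frac{\theta_4'(x)}{\theta_4(x)}$ together with the positivity $\frac{\theta_4'(x)}{\theta_4(x)} > 0$. For $\theta_2$ the analogous quantity $\frac{\theta_2'(x)}{\theta_2(x)}$ is negative, and from $\frac{d}{dx}(x\frac{\theta_2'(x)}{\theta_2(x)}) = \frac{\theta_2'(x)}{\theta_2(x)} + x\frac{d}{dx}\frac{\theta_2'(x)}{\theta_2(x)} < 0$ (Proposition \ref{prop_theta2_monotone}) one only obtains $x\frac{d}{dx}\frac{\theta_2'(x)}{\theta_2(x)} < -\frac{\theta_2'(x)}{\theta_2(x)}$, whose right-hand side is now \emph{positive}; this gives no information on the sign of the derivative. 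Equivalently, routing the claim through the identity \eqref{eq_identity_theta_2_4} reduces it to $\tfrac12 + 2u\,h(u) + u^2 h'(u) > 0$ with $h = \theta_4'/\theta_4$ and $u = 1/x$, which would demand a quantitative comparison of $h'$ with $h$ that the purely qualitative log-concavity of $\theta_4$ does not supply. This sign obstruction is precisely why I would bypass the $\theta_4$ route and instead rely on the elementary Cauchy--Schwarz (moment) argument, which is the robust, self-contained way to obtain strict log-convexity of a Laplace transform of a positive measure.
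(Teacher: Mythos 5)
Your proof is correct and coincides with the paper's own argument: negativity follows from termwise differentiation of the series \eqref{eq_def_theta_2}, and strict logarithmic convexity follows from the Cauchy--Schwarz inequality in $\ell^2(\N)$ applied to the sequences $\bigl(e^{-\pi(k-1/2)^2 x/2}\bigr)$ and $\bigl(\pi(k-1/2)^2 e^{-\pi(k-1/2)^2 x/2}\bigr)$, with strictness from their linear independence. Your closing remark on why the monotonicity of $x\,\theta_2'(x)/\theta_2(x)$ cannot be leveraged as it was for $\theta_4$ (the sign of $\theta_2'/\theta_2$ ruins that squeeze) is a correct observation and consistent with the paper's choice to argue directly.
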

\begin{proof}
	The fact that $\frac{\theta_2'(x)}{\theta_2(x)} < 0$ is easily checked by using \eqref{eq_def_theta_2}. From the definition we see that $\theta_2 > 0$ and due to its unconditional convergence, the derivatives can be computed by differentiating each term. We find out that $\theta_2'(x) < 0$ and the negativity is proven.
	
	The logarithmic convexity statement basically follows from the Cauchy-Schwarz inequality for the Hilbert--space $\ell^2(\N)$. We compute
	\begin{equation}
		\frac{d^2}{dx^2} \Big(\log\big(\theta_2(x)\big)\Big) = \frac{\theta_2''(x) \theta_2(x) - \theta_2'(x)^2}{\theta_2(x)^2}.
	\end{equation}
	Strict logarithmic convexity of $\theta_2$ is now equivalent to
	\begin{equation}
		\theta_2''(x) \theta_2(x) - \theta_2'(x)^2 > 0,
	\end{equation}
	which can be re-written as
	\begin{equation}\label{eq_CSI_theta2}
		\left(\sum_{k \geq 1} \pi^2 \left(k-\tfrac{1}{2}\right)^4 e^{-\pi \left(k-\tfrac{1}{2}\right)^2 x}\right) \left(\sum_{k \geq 1} e^{-\pi \left(k-\tfrac{1}{2}\right)^2 x}\right) >
		\left(\sum_{k \geq 1} \pi \left(k-\tfrac{1}{2}\right)^2 e^{-\pi \left(k-\tfrac{1}{2}\right)^2 x}\right)^2.
	\end{equation}
	We set
	\begin{equation}
		\big(a_k\big)_{k = 1}^\infty = \left(\pi \left(k-\tfrac{1}{2}\right)^2 e^{-\pi \left(k-\tfrac{1}{2}\right)^2 x/2} \right)_{k = 1}^\infty
		\quad \text{ and } \quad
		\big(b_k\big)_{k = 1}^\infty = \left(e^{-\pi \left(k-\tfrac{1}{2}\right)^2 x/2} \right)_{k = 1}^\infty \, .
	\end{equation}
	For $x > 0$ fixed, $(a_k), (b_k) \in \ell^2(\N)$. Inequality \eqref{eq_CSI_theta2} is now equivalent to
	\begin{equation}
		\norm{(a_k)}_{\ell^2(\N)}^2 \norm{(b_k)}_{\ell^2(\N)}^2 > \big\langle (a_k), (b_k) \big\rangle_{\ell^2(\N)}^2,
	\end{equation}
	where strict inequality follows since $(a_k)$ and $(b_k)$ are linearly independent in $\ell^2(\N)$.
\end{proof}

Also, in \cite{FaulhuberSteinerberger_Theta_2017} it was claimed that $x^2 \frac{\theta_4'(x)}{\theta_4(x)}$ is monotonically decreasing and convex. Both properties were recently proved by Ernvall--Hytönen and Vesalainen \cite{HytVes_Theta4_2017}. We have a similar statement for $\theta_2$.

\begin{proposition}\label{prop_theta_2_4_monotone_s2}
	The functions $x^2 \frac{\theta_2'(x)}{\theta_2(x)}$ and $x^2 \frac{\theta_4'(x)}{\theta_4(x)}$ are strictly decreasing on $\R_+$.
\end{proposition}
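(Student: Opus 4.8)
The plan is to treat the two functions asymmetrically, since they are not interchangeable under the duality. For $x^2 \frac{\theta_4'(x)}{\theta_4(x)}$ I would simply invoke the result of Ernvall--Hyt\"onen and Vesalainen recalled just above, which already asserts that this function is strictly decreasing (in fact convex). Thus the genuinely new content is the claim for $\theta_2$, and it is here that the duality identities of Lemma \ref{lem_identity} carry the argument.

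For the $\theta_2$ statement I would start from the first identity in \eqref{eq_identity_theta_2_4} and multiply it by $x$ so as to isolate the quantity of interest:
\begin{equation}
	x^2 \frac{\theta_2'(x)}{\theta_2(x)} = -\frac{x}{2} - \frac{\theta_4'\left(\tfrac{1}{x}\right)}{\theta_4\left(\tfrac{1}{x}\right)}.
\end{equation}
Differentiating in $x$ and applying the chain rule to the last term, using $\frac{d}{dx}\left[\frac{\theta_4'(1/x)}{\theta_4(1/x)}\right] = -\frac{1}{x^2}\left.\frac{d}{dt}\frac{\theta_4'(t)}{\theta_4(t)}\right|_{t=1/x}$, the plan is to reach
\begin{equation}
	\frac{d}{dx}\left(x^2 \frac{\theta_2'(x)}{\theta_2(x)}\right) = -\frac{1}{2} + \frac{1}{x^2}\left.\frac{d}{dt}\frac{\theta_4'(t)}{\theta_4(t)}\right|_{t=1/x}.
\end{equation}

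The key observation is then that the last factor is the derivative of the logarithmic derivative of $\theta_4$, which is strictly negative by the strict logarithmic concavity of $\theta_4$ proved in Proposition \ref{prop_theta4_log_concave}. Hence the second summand is strictly negative and the whole expression is bounded above by $-\tfrac{1}{2} < 0$, which yields that $x^2 \frac{\theta_2'(x)}{\theta_2(x)}$ is strictly decreasing on $\R_+$. A pleasant feature of this route is that it needs only the (already established) log-concavity of $\theta_4$, and \emph{not} the harder monotonicity of $x^2 \frac{\theta_4'(x)}{\theta_4(x)}$ imported from the cited work. The only point demanding care is the sign bookkeeping in the chain rule and checking that strictness of the log-concavity propagates to strict monotonicity; I do not anticipate a real obstacle for the $\theta_2$ part. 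The single deep ingredient, namely the decrease of $x^2 \frac{\theta_4'(x)}{\theta_4(x)}$ itself, is exactly what I would cite rather than reprove: an attempt to derive it symmetrically from the dual identity in \eqref{eq_identity_theta_2_4} would instead require the quantitative bound $t^2 \frac{d}{dt}\frac{\theta_2'(t)}{\theta_2(t)} < \tfrac{1}{2}$, and since $\theta_2$ is only log-convex this has the wrong sign to come for free, making it the genuinely difficult step.
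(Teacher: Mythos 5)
Your proposal is correct and follows essentially the same route as the paper: the $\theta_4$ statement is imported from Ernvall--Hyt\"onen and Vesalainen, and the $\theta_2$ statement is derived by multiplying the first identity in \eqref{eq_identity_theta_2_4} by $x$, differentiating, and using the strict logarithmic concavity of $\theta_4$ (Proposition \ref{prop_theta4_log_concave}) to fix the sign of the term $\frac{d}{dx}\bigl(\theta_4'(\tfrac{1}{x})/\theta_4(\tfrac{1}{x})\bigr)$. Your closing observation about the asymmetry of the two cases also matches the remark the paper makes immediately after its proof.
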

\begin{proof}
	The result involving $\theta_4$ was proved by Ernvall--Hytönen and Vesalainen \cite{HytVes_Theta4_2017}.
	
	We proceed with proving the result involving $\theta_2$. We already know that $\theta_4$ is logarithmically concave, which is equivalent to the fact that $\frac{\theta_4'(x)}{\theta_4(x)}$ is monotonically decreasing. By using identity \eqref{eq_identity_theta_2_4} we get
	\begin{equation}
		x^2 \frac{\theta_2'(x)}{\theta_2(x)} = -\frac{x}{2} - \frac{\theta_4'(\tfrac{1}{x})}{\theta_4(\tfrac{1}{x})}.
	\end{equation}
	Therefore, we have
	\begin{equation}
		\frac{d}{dx} \left(x^2 \frac{\theta_2'(x)}{\theta_2(x)}\right) = -\frac{1}{2} - \, \underbrace{\frac{d}{dx} \left(\frac{\theta_4'(\tfrac{1}{x})}{\theta_4(\tfrac{1}{x})}\right)}_{> 0} < 0.
	\end{equation}
\end{proof}
We note that a similar simple argument as used for the expression involving $\theta_2$ does not work for $\theta_4$. Due to identity \eqref{eq_identity_theta_2_4} it also seems plausible to assume that for a monomial weight $x^r$, the second power, i.e.~$x^2$, is the limit for the monotonicity properties of the logarithmic derivative of $\theta_2$ and $\theta_4$ as stated in Proposition \ref{prop_theta_2_4_monotone_s2} and numerical investigations point in that direction. 

Using the fact that $x^2 \frac{\theta_4'(x)}{\theta_4(x)}$ is strictly decreasing and Proposition \ref{prop_theta_2_log_convex} we conclude that
\begin{equation}
	0 < x^2 \frac{d^2}{dx^2} \log(\theta_2(x)) = x^2 \frac{d}{dx} \left(\frac{\theta_2'(x)}{\theta_2(x)}\right)< \frac{1}{2}
\end{equation}	
Moreover, it seems to be true that $x^2 \frac{d^2}{dx^2} \log(\theta_2(x))$ is strictly decreasing, which would in return imply that $x^2 \frac{\theta_4'(x)}{\theta_4(x)}$ is strictly decreasing.

However, we leave this problem open and close this section with symmetry properties involving $\theta_2$, $\theta_4$ and repeated applications of $x \frac{d}{dx}$.
\begin{proposition}
	For $x \in \R_+$ and $2 \leq n \in \N$ the following holds.
	\begin{equation}
		\left(x \frac{d}{dx}\right)^n \log \left( \theta_2(x) \right) = (-1)^n \left(x \frac{d}{dx}\right)^n \log \left( \theta_4\left( \tfrac{1}{x} \right) \right).
	\end{equation}
\end{proposition}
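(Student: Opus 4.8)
The plan is to reduce the assertion to the first identity in \eqref{eq_identity_theta_2_4} and then to iterate the operator $x\frac{d}{dx}$, using two elementary observations: that $x\frac{d}{dx}$ annihilates constants, and that it transforms in a controlled way under the reciprocal substitution $x\mapsto\tfrac1x$. The factor $(-1)^n$ in the statement will be produced entirely by this substitution, while the restriction $n\geq 2$ will be forced by the need to remove an additive constant.

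First I would record the chain-rule identity that governs the sign bookkeeping: for any differentiable $G$ on $\R_+$ one has
\begin{equation}
\left(x\frac{d}{dx}\right) G\left(\tfrac1x\right) = -\left[\left(x\frac{d}{dx}\right)G\right]\left(\tfrac1x\right),
\end{equation}
which follows at once from $\frac{d}{dx}G(\tfrac1x) = -x^{-2}G'(\tfrac1x)$ and the definition of the operator. Iterating this relation produces exactly one factor $-1$ for each application of $x\frac{d}{dx}$, and this is the sole source of the sign in the claim.

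Next I would rewrite the first identity in \eqref{eq_identity_theta_2_4} in operator form. Since $x\frac{\theta_2'(x)}{\theta_2(x)} = \left(x\frac{d}{dx}\right)\log\theta_2(x)$ and, by the previous display, $\tfrac1x\frac{\theta_4'(1/x)}{\theta_4(1/x)} = \left[\left(x\frac{d}{dx}\right)\log\theta_4\right]\left(\tfrac1x\right)$, identity \eqref{eq_identity_theta_2_4} becomes
\begin{equation}
\left(x\frac{d}{dx}\right)\log\theta_2(x) = -\left[\left(x\frac{d}{dx}\right)\log\theta_4\right]\left(\tfrac1x\right) - \frac12.
\end{equation}
This is precisely the claimed relation for $n=1$ apart from the additive constant $-\tfrac12$, and the presence of this constant is exactly what excludes the case $n=1$ and forces $n\geq 2$.

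Finally I would argue by induction on $n$, applying $x\frac{d}{dx}$ to both sides. Applying the operator once to the last display annihilates the constant and, via the chain-rule identity, turns the right-hand side into $(-1)^2\left[\left(x\frac{d}{dx}\right)^2\log\theta_4\right]\left(\tfrac1x\right)$, which settles the base case $n=2$. For the inductive step, applying $x\frac{d}{dx}$ to $(-1)^n\left[\left(x\frac{d}{dx}\right)^n\log\theta_4\right]\left(\tfrac1x\right)$ produces, again by the chain-rule identity, the factor $(-1)^{n+1}$ together with $\left(x\frac{d}{dx}\right)^{n+1}\log\theta_4$ evaluated at $\tfrac1x$, which is the claim for $n+1$. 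The only genuine obstacle is the sign bookkeeping: one must keep straight that differentiating the composite and then re-expressing the outcome as $\left(x\frac{d}{dx}\right)^n\log\theta_4$ evaluated at $\tfrac1x$ is precisely what assembles the prefactor $(-1)^n$, and that the constant $-\tfrac12$ survives only at the level $n=1$.
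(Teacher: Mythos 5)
Your proof is correct and follows exactly the paper's (very terse) argument: the paper likewise derives the statement from the first identity in \eqref{eq_identity_theta_2_4} by induction, with the additive constant $-\tfrac12$ explaining the restriction $n \geq 2$. Your write-up merely makes explicit the sign bookkeeping under $x \mapsto \tfrac1x$ that the paper leaves as "obvious."
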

\begin{proof}
	The statement is an obvious consequence of identity \eqref{eq_identity_theta_2_4} and follows by induction.
\end{proof}

\subsection{Properties of Theta-3}\label{sec_Theta_3}
We will now study properties of the logarithm of $\theta_3$ on a logarithmic scale.  
\begin{proposition}\label{prop_theta3_monotone}
	The function $x \frac{\theta_3'(x)}{\theta_3(x)}$ is strictly increasing on $\R_+$.
\end{proposition}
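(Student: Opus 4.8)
The plan is to combine the self-duality identity \eqref{eq_identity_theta_3} with a direct estimate, reducing the whole statement to the range $x \geq 1$ where the defining series converge rapidly. Before doing so I would record the analytic content of the claim. Writing $\Phi(x) = x\frac{\theta_3'(x)}{\theta_3(x)}$ and passing to the logarithmic variable $y = \log x$, one has $\Phi(e^y) = \frac{d}{dy}\log\theta_3(e^y)$, so asserting that $\Phi$ is strictly increasing is the same as asserting strict convexity of $y \mapsto \log\theta_3(e^y)$. A direct differentiation gives
\begin{equation}
	\Phi'(x) = \frac{\pi}{\theta_3(x)^2}\Big(x\pi\big(S_4 S_0 - S_2^2\big) - S_2 S_0\Big), \qquad S_{2j} = \sum_{k \in \Z} k^{2j} e^{-\pi k^2 x},
\end{equation}
so the proposition is equivalent to the moment inequality $x\pi(S_4 S_0 - S_2^2) > S_2 S_0$ for all $x \in \R_+$. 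Reading $e^{-\pi k^2 x}/S_0$ as a probability weight on $\Z$ with mean $\langle k^2 \rangle = S_2/S_0$, this is precisely $\pi x \operatorname{Var}(k^2) > \langle k^2\rangle$, and positivity of the variance term is transparent from the Lagrange identity $S_4 S_0 - S_2^2 = \tfrac12 \sum_{k,l \in \Z}(k^2-l^2)^2 e^{-\pi(k^2+l^2)x}$.

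Next I would halve the problem. In the present notation \eqref{eq_identity_theta_3} reads $\Phi(x) + \Phi(1/x) = -\tfrac12$, and differentiating gives
\begin{equation}
	\Phi'(x) = \tfrac{1}{x^2}\,\Phi'(1/x).
\end{equation}
Since $1/x^2 > 0$, the sign of $\Phi'(x)$ equals that of $\Phi'(1/x)$; as $x \in (0,1)$ corresponds to $1/x \in (1,\infty)$, it therefore suffices to prove $\Phi'(x) > 0$ for $x \geq 1$.

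Finally, for $x \geq 1$ I would establish the moment inequality by a leading-order analysis. In both $S_4 S_0 - S_2^2$ and $S_2 S_0$ the smallest surviving exponent comes from the indices $\{0,\pm 1\}$, giving $S_4 S_0 - S_2^2 = 2e^{-\pi x}\big(1 + O(e^{-3\pi x})\big)$ and $S_2 S_0 = 2e^{-\pi x}\big(1 + O(e^{-\pi x})\big)$; hence the ratio equals
\begin{equation}
	\frac{x\pi\big(S_4 S_0 - S_2^2\big)}{S_2 S_0} = \pi x \big(1 + O(e^{-\pi x})\big) \geq \pi\big(1 + O(e^{-\pi})\big) > 1.
\end{equation}
The main obstacle is making this last inequality fully rigorous: one must bound the tails $\sum_{|k|\geq 2} k^{2j} e^{-\pi k^2 x}$ by explicit geometric majorants valid for $x \geq 1$, and check that the resulting corrections cannot erode the comfortable margin between $\pi x \geq \pi \approx 3.14$ and $1$. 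Since every correction is of size $O(e^{-\pi x}) \leq O(e^{-\pi}) \approx 0.043$, this reduces to a careful but elementary estimate. Alternatively, one could route the same computation through the already-established monotonicity of $x^2\frac{\theta_4'(x)}{\theta_4(x)}$ by invoking the product identity $\theta_3(x)\theta_4(x) = \theta_4(2x)^2$, which yields $\Phi(x) = 2\,\Phi_4(2x) - \Phi_4(x)$ with $\Phi_4(x)=x\frac{\theta_4'(x)}{\theta_4(x)}$; but this merely transfers the difficulty to a comparison of $\Phi_4'(x)$ and $\Phi_4'(2x)$ of the same quantitative nature, so I expect the direct tail estimate above to be the cleanest route.
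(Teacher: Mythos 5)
Your proposal is correct in substance, but it takes a genuinely different route from the paper, for the simple reason that the paper does not prove Proposition \ref{prop_theta3_monotone} at all: it states the result and cites \cite{FaulhuberSteinerberger_Theta_2017}. What your argument buys is a self-contained proof, and its structure (reduce to $x\geq 1$ via the functional equation, then estimate rapidly convergent series) mirrors the methodology the paper itself uses elsewhere, e.g.\ in the proof of Proposition \ref{prop_log_log_concave}. Your individual steps check out: the formula $\Phi'(x) = \pi\,\theta_3(x)^{-2}\bigl(\pi x\,(S_4S_0-S_2^2) - S_2S_0\bigr)$ is correct, differentiating \eqref{eq_identity_theta_3} does give $\Phi'(x) = x^{-2}\Phi'(1/x)$ and legitimately halves the problem, and the variance reading $\pi x \operatorname{Var}(k^2) > \langle k^2\rangle$ is exactly the right way to see why the inequality holds. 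Moreover, the tail estimate you defer is as elementary as you claim, and half of it is free: discarding all but the four terms with $\{|k|,|l|\}=\{1,0\}$ in the Lagrange identity gives, with no tail to control,
\begin{equation}
	S_4 S_0 - S_2^2 = \tfrac{1}{2}\sum_{k,l\in\Z}(k^2-l^2)^2 e^{-\pi(k^2+l^2)x} \;\geq\; 2\,e^{-\pi x},
\end{equation}
while for $x \geq 1$ one bounds $S_0 \leq 1 + 2e^{-\pi x}(1-e^{-\pi x})^{-1} < 1.1$ and, using $k^2-1\geq 3(k-1)$ for $k \geq 2$, $S_2 \leq 2e^{-\pi x}\bigl(1 + \sum_{k\geq 2}k^2e^{-3\pi(k-1)}\bigr) < 2.01\,e^{-\pi x}$; hence $S_2S_0 < 2.3\,e^{-\pi x} < 2\pi e^{-\pi x} \leq \pi x\,(S_4S_0 - S_2^2)$, which closes your argument with a wide margin. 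Your assessment of the alternative route through $\theta_3(x)\theta_4(x)=\theta_4(2x)^2$ is also accurate: it reduces the claim to $4\Phi_4'(2x) > \Phi_4'(x)$ for the negative quantity $\Phi_4'$, a quantitative decay statement that the paper's Propositions \ref{prop_theta4_monotone} and \ref{prop_theta_2_4_monotone_s2} do not supply, so your direct estimate is indeed the better path.
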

This result was proved in \cite{FaulhuberSteinerberger_Theta_2017}. As a consequence of Lemma \ref{lem_identity} we get the following results which were already proved in the author's doctoral thesis \cite{Faulhuber_PhD_2016}.
\begin{proposition}\label{pro_theta3_symmetry}
	For $x \in \R_+$ and $2 \leq n \in \N$ the following holds.
	\begin{equation}
		\left(x \frac{d}{dx}\right)^n \log \left( \theta_3(x) \right) = (-1)^n \left(x \frac{d}{dx}\right)^n \log \left( \theta_3\left( \tfrac{1}{x} \right) \right).
	\end{equation}
\end{proposition}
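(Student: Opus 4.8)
The plan is to reduce the statement to a single functional equation and then differentiate. Write $D = x\frac{d}{dx}$ and set $\psi_n(x) = D^n \log\theta_3(x)$, so that the left-hand side is $\psi_n(x)$ while the right-hand side is $(-1)^n\psi_n(\tfrac{1}{x})$, the operator being applied to $\log\theta_3$ and the outcome then evaluated at the reciprocal argument. Thus the assertion is the parity relation
\begin{equation}
	\psi_n(x) = (-1)^n\,\psi_n\!\left(\tfrac{1}{x}\right), \qquad n\ge 2.
\end{equation}
The single computation that drives everything is that $D$ \emph{anticommutes} with the inversion $x\mapsto \tfrac{1}{x}$: for any differentiable $\phi$ the chain rule gives $D\big[\phi(\tfrac{1}{x})\big] = -\,(D\phi)(\tfrac{1}{x})$. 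Equivalently, passing to the logarithmic variable $y=\log x$ advocated in the text — under which $D=\frac{d}{dy}$ and inversion becomes $y\mapsto -y$ — the function $P(y)=\log\theta_3(e^y)$ turns the modular identity \eqref{eq_Poisson_theta_3} into the clean functional equation $P(-y)=P(y)+\tfrac{y}{2}$, and then $\psi_n(x)=P^{(n)}(y)$ while $\psi_n(\tfrac{1}{x})=P^{(n)}(-y)$.

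Granting this, I would run the induction exactly as the remark after the statement suggests. The base case $n=2$ comes from the first-order identity \eqref{eq_identity_theta_3} furnished by Lemma \ref{lem_identity}, namely $\psi_1(x)+\psi_1(\tfrac{1}{x})=-\tfrac{1}{2}$: applying $D$ once and using the anticommutation rule gives $\psi_2(x)-\psi_2(\tfrac{1}{x})=0$, since $D$ annihilates the constant $-\tfrac{1}{2}$, and this is the asserted relation at $n=2$. For the inductive step, assume $\psi_n(x)=(-1)^n\psi_n(\tfrac{1}{x})$ and apply $D$ to both sides; the left becomes $\psi_{n+1}(x)$, while on the right the anticommutation rule yields $(-1)^n D\big[\psi_n(\tfrac{1}{x})\big]=(-1)^{n+1}\psi_{n+1}(\tfrac{1}{x})$, which is precisely the relation at level $n+1$. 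In the $y$-picture this is simply the observation that differentiating $P(-y)=P(y)+\tfrac{y}{2}$ a total of $n\ge 2$ times kills the affine term and leaves $(-1)^nP^{(n)}(-y)=P^{(n)}(y)$.

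I do not expect a genuine obstacle here, as the content is essentially bookkeeping; the two points that require care are conceptual rather than computational. First, one must read the notation correctly: it is the anticommutation of $D$ with inversion that produces the sign $(-1)^n$ in front of the reciprocal term, and tracking this sign through the induction is the only place an error could creep in. Second, the hypothesis $n\ge 2$ is essential and not cosmetic — it is exactly what guarantees that the inhomogeneous term (the constant $-\tfrac{1}{2}$ in \eqref{eq_identity_theta_3}, equivalently the affine term $\tfrac{y}{2}$ in $P(-y)=P(y)+\tfrac{y}{2}$) is annihilated; for $n=1$ that term survives and the clean parity relation genuinely fails. This is entirely parallel to the proof of the preceding $\theta_2$–$\theta_4$ symmetry, now built on \eqref{eq_identity_theta_3} in place of \eqref{eq_identity_theta_2_4}.
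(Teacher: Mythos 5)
Your proof is correct and takes essentially the same route as the paper: the paper's own proof is exactly the one-line remark that the statement follows from identity \eqref{eq_identity_theta_3} by induction, which is the induction you carry out in detail (the anticommutation of $x\frac{d}{dx}$ with inversion producing the sign $(-1)^n$, and the constant $-\tfrac{1}{2}$ being killed at the first differentiation, which is why $n\ge 2$ is needed). Your reformulation in the logarithmic variable, $P(-y)=P(y)+\tfrac{y}{2}$, is just a compact repackaging of that same argument.
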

\begin{proof}
	The statement is an obvious consequence of identity \eqref{eq_identity_theta_3} and follows by induction.
\end{proof}
The following statement and its proof were also already given in \cite{Faulhuber_PhD_2016}. The techniques are similar to the techniques used by Montgomery in \cite{Montgomery_Theta_1988}. The computer algebra system Mathematica \cite{Mathematica} was used at some points in the proof in order to compute explicit values or closed expressions for geometric series, but in principle all computations can also be checked by hand. The proof was adjusted to a level which should be quite accessible with no or only little help of computer algebra software.
\begin{proposition}\label{prop_log_log_concave}
	For $x \in \R_+$, the function
	\begin{equation}\label{eq_log_log_concanve_prop}
		\left(x \frac{d}{dx}\right)^3 \log \left( \theta_3(x) \right)
	\end{equation}
	is positive for $x \in (0,1)$ and negative for $x > 1$. Also, the function is anti--symmetric in the following sense
	\begin{equation}
		\left(x \frac{d}{dx} \right)^3 \log \left( \theta_3(x) \right) = -\left(x \frac{d}{dx}\right)^3 \log \left( \theta_3\left( \tfrac{1}{x} \right) \right).
	\end{equation}
\end{proposition}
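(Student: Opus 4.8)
The plan is to treat the two assertions separately, deriving the anti--symmetry first, since it both pins down the value at $x=1$ and reduces the sign statement to a single half--line. Writing $F(x)=\left(x\frac{d}{dx}\right)^3\log\theta_3(x)$ for brevity, the anti--symmetry is exactly Proposition \ref{pro_theta3_symmetry} specialized to $n=3$, where the factor $(-1)^3=-1$ produces the sign; at bottom it comes from the Jacobi identity \eqref{eq_Poisson_theta_3}, for taking logarithms gives $\tfrac12\log x+\log\theta_3(x)=\log\theta_3(\tfrac1x)$, and since $\left(x\frac{d}{dx}\right)^2\log x=0$ the operator $\left(x\frac{d}{dx}\right)^3$ kills the $\tfrac12\log x$ term, leaving $F(x)=-F(\tfrac1x)$. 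In particular $F(1)=0$, so it suffices to prove $F(x)<0$ for $x>1$; the inequality $F(x)>0$ on $(0,1)$ then follows by replacing $x$ with $1/x$.

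For the sign, the first step is to make $F$ explicit. Using the standard expansion of the third logarithmic derivative, with $T=\theta_3(x)$ and $T_n=\left(x\frac{d}{dx}\right)^n\theta_3$, one has
\begin{equation}
	F=\frac{T_3}{T}-3\frac{T_1T_2}{T^2}+2\frac{T_1^3}{T^3}.
\end{equation}
Since $\left(x\frac{d}{dx}\right)^n e^{-\pi k^2x}=P_n(\pi k^2x)\,e^{-\pi k^2x}$ with $P_1(u)=-u$, $P_2(u)=u^2-u$ and $P_3(u)=-u^3+3u^2-u$, multiplying through by $\theta_3^3>0$ turns the claim $F<0$ into the negativity of the triple sum $T_3T^2-3T_1T_2T+2T_1^3=\sum_{k,l,m\in\Z}C(k,l,m;x)\,e^{-\pi(k^2+l^2+m^2)x}$, which I would symmetrize over the six permutations of $(k,l,m)$. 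Equivalently, in probabilistic language, letting $u=\pi k^2x$ carry the Gibbs weights $e^{-\pi k^2x}/\theta_3(x)$, the same computation collapses to
\begin{equation}
	F(x)=3\,\mathrm{Var}(u)-\langle u\rangle-\mu_3(u),
\end{equation}
with $\mu_3$ the third central moment; the target is that this is negative for $x>1$.

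The sign I would establish in two regimes. For large $x$ the weights concentrate on $k=0$, the only non--negligible contribution coming from $k=\pm1$, and the leading behaviour is $F(x)=-2\pi^3x^3e^{-\pi x}\bigl(1+o(1)\bigr)<0$; here a crude tail estimate on the terms with $|k|\ge2$ and on the cross terms, measured against the dominant $-2\pi^3x^3e^{-\pi x}$, suffices, and this is where the closed forms for the geometric series computed with the help of computer algebra enter. The delicate regime is $x$ close to $1$, and this is the main obstacle. The heuristic ``$k=\pm1$ dominates'' fails badly there: the single--term approximation does not vanish at $x=1$, whereas $F(1)=0$ exactly, so the correct sign just above $x=1$ is produced only by the cancellation among infinitely many terms forced by the Jacobi identity. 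I would handle this by working with the fully symmetrized kernel $C(k,l,m;x)$ and showing that, grouped over permutation orbits, its coefficient is $\le0$ for every $x\ge1$ and strictly for $x>1$, using that for $x\ge1$ the relevant polynomials in $\pi x$ keep a fixed sign, the boundary case $x=1$ being guaranteed consistent by the anti--symmetry.

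A useful reformulation, which I would keep in reserve to organize the $x\in(1,x_0]$ estimate, is that $F=\left(x\frac{d}{dx}\right)\Psi_2$ with $\Psi_2=\left(x\frac{d}{dx}\right)^2\log\theta_3$, so that $F<0$ on $(1,\infty)$ is equivalent to $\Psi_2$ being strictly decreasing there. Now $\Psi_2$ is positive by Proposition \ref{prop_theta3_monotone}, even in $\log x$ by Proposition \ref{pro_theta3_symmetry} with $n=2$, and tends to $0$ at both ends of $\R_+$; hence the desired sign of $F$ is precisely the statement that $\Psi_2$ is unimodal with its unique maximum at $x=1$, and proving this monotonicity is the genuine content of the proposition.
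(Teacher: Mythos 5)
Your anti--symmetry argument and the reduction to proving $F(x)=\left(x\frac{d}{dx}\right)^3\log\theta_3(x)<0$ for $x>1$ are correct and identical to the paper's, and your moment identity $F=3\,\mathrm{Var}(u)-\langle u\rangle-\mu_3(u)$ and large--$x$ asymptotics $F(x)\sim-2\pi^3x^3e^{-\pi x}$ are also right. The gap is in the step you yourself flag as the main obstacle, and the mechanism you propose for it is provably false: the symmetrized orbit coefficients of $T_3T^2-3T_1T_2T+2T_1^3$ are \emph{not} all $\le0$ for $x\ge1$. Concretely, with $u=\pi x$, summing the summand $P_3(u_k)-3P_1(u_k)P_2(u_l)+2P_1(u_k)P_1(u_l)P_1(u_m)$ over the permutation orbit of the pattern $(\pm1,\pm1,0)$ gives $4\bigl(2P_3(u)-3P_1(u)P_2(u)\bigr)=4\,u\,(u^2+3u-2)>0$ for every $x\ge1$, while the orbit of $(\pm1,0,0)$ gives $2P_3(u)=-2u(u^2-3u+1)<0$. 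Indeed no orbitwise sign argument can work near $x=1$: since $F(1)=0$ while individual orbit contributions at $x=1$ are nonzero, some orbits must contribute strictly positively there, and by continuity they remain positive just above $x=1$ (numerically, at $x=1$ the two orbits above contribute roughly $-0.392$ and $+0.406$ and nearly cancel). The negativity of $F$ for $x>1$ is a cancellation \emph{across} exponential orbits, not within them, so your kernel--positivity plan collapses exactly in the regime it was designed for.

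The paper closes this hole with an idea absent from your proposal: instead of a termwise sign near $x=1$, it proves a monotonicity statement there. Writing $\psi=\theta_3'/\theta_3$, it bounds $\psi,\psi',\psi'',\psi'''$ via the triple--product representation to show, first, that $F(x)<(72x^2-60x^3)e^{-\pi x}<0$ for $x>\tfrac65$, and second, that $\left(x\frac{d}{dx}\right)^4\log\theta_3(x)<(168x^2-360x^3+183x^4)e^{-\pi x}<0$ on $\bigl(1,\tfrac{60+2\sqrt{46}}{61}\bigr)$, an interval reaching past $1.205>\tfrac65$; hence $F$ is strictly decreasing there, and since $F(1)=0$ this yields $F<0$ on that interval, which overlaps the first one. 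Your closing paragraph observes that $F<0$ on $(1,\infty)$ is equivalent to $\left(x\frac{d}{dx}\right)^2\log\theta_3$ being strictly decreasing there and calls this ``the genuine content of the proposition'' --- that is accurate, but it is a restatement rather than a proof, so as written your proposal does not establish the sign claim.
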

\begin{proof}
	To simplify notation we set
	\begin{equation}
		\psi(x) = (\log \circ \, \theta_3)'(x) = \frac{\theta_3'(x)}{\theta_3(x)}.
	\end{equation}
	Proposition \ref{pro_theta3_symmetry} settles the part concerning the anti--symmetry of expression \eqref{eq_log_log_concanve_prop}. Therefore, it is also clear that we only need to prove the statement about the negativity for $x > 1$, the result for $x \in (0,1)$ then follows immediately. We start with the following calculation;
	\begin{equation}\label{eq_log_log_concave}
		\begin{aligned}
			\left(x \frac{d}{dx}\right)^3 \log \left( \theta_3(x) \right) & = \left(x \, \frac{d}{dx}\right)^2 \left(x \, \psi(x) \right)\\
			& = x \, \psi(x) + 3 x^2 \, \psi'(x) + x^3 \, \psi''(x).
		\end{aligned}
	\end{equation}
	In particular, Proposition \ref{pro_theta3_symmetry} implies that $\psi(1) + 3 \psi'(1) + \psi''(1) = 0$. We proceed in two steps:
	\begin{enumerate}[(i)]
		\item{We will show that $\left(x \frac{d}{dx}\right)^3 \log \left( \theta_3(x) \right)$ is negative for $x > 1.1$.}\label{step1}
		\item{We will show that $\left(x \frac{d}{dx}\right)^3 \log \left( \theta_3(x) \right)$ is decreasing on an interval containing $\left( 1, 1.1 \right)$.}\label{step2}
	\end{enumerate}
	
	\textit{First step}: We will use a combination of the asymptotic and the local behavior to establish the result claimed in \eqref{step1}.

	Since $\psi$ is the logarithmic derivative of $\theta_3$, we use Jacobi's triple product formula for $\theta_3$ to obtain a series representation for $\psi$. In order to control the expression in equation \eqref{eq_log_log_concave} we compute the derivatives of $\psi$ up to order 2.
	\begin{align}
		\psi(x) & = \sum_{k \geq 1} \left( \frac{2 k \pi e^{-2 k \pi x}}{1-e^{-2 k \pi x}} - 2 \frac{(2 k -1) \pi e^{-(2 k -1) \pi x}}{1+e^{-(2 k -1) \pi x}} \right) = \frac{\theta_3'(x)}{\theta_3(x)}\\
		\psi'(x) & = \sum_{k \geq 1} \left( -\frac{(2 k \pi)^2 e^{-2 k \pi x}}{\left(1-e^{-2 k \pi x}\right)^2} + 2 \frac{((2 k -1) \pi)^2 e^{-(2 k -1) \pi x}}{\left(1+e^{-(2 k -1) \pi x}\right)^2} \right)\\
		\psi''(x) & = \sum_{k \geq 1} \left( \frac{(2 k \pi)^3 e^{-2 k \pi x}\left(1+e^{-2 k \pi x}\right)}{\left(1-e^{-2 k \pi x}\right)^3} - 2 \frac{((2 k -1) \pi)^3 e^{-(2 k -1) \pi x} \left(1 - e^{-(2k-1)\pi x}\right)}{\left(1+e^{-(2 k -1) \pi x}\right)^3} \right)
	\end{align}
	
	It is easy to verify that $\psi^{(n)}(x) = \mathcal{O}(e^{-\pi x})$, $n = 0,1,2$ (this actually holds for any $n \in \N$). However, due to the monomial term in $k$, which is of the order of the derivative plus 1, we will also consider the contributions of terms involving $e^{- 2 \pi x}$ to gain more control on the local behavior close to 1.
	
	The techniques are standard and the proof is inspired by the proofs in Montgomery's article \cite{Montgomery_Theta_1988}. In particular, we will estimate parts of the series by the leading term(s), by appropriate geometric series or by their values at $x = 1$ (or use combinations of the mentioned methods). We will try to indicate which kind of estimates are used at which point in the proof.
	
	We start with an estimate for $\psi$ for $x > 1$. We will first estimate the denominators and after that estimate the positive part by a geometric series and the negative part by the leading term. This will also be the common theme throughout the proof.
	\begin{align}
		\psi(x) & = \sum_{k \geq 1} \left( \frac{2 k \pi e^{-2 k \pi x}}{\underbrace{1-e^{-2 k \pi x}}_{\tfrac{1}{1.002}<}} - 2 \frac{(2 k -1) \pi e^{-(2 k -1) \pi x}}{\underbrace{1+e^{-(2 k -1) \pi x}}_{<\tfrac{1}{0.958}}} \right)\\
		& < 1.002 \cdot 2 \pi \sum_{k\geq 1} \left(k \, e^{-2 k \pi x} \right) - 0.958 \cdot 2 \pi \, e^{-\pi x}\\
		& = 1.002 \cdot 2 \pi \underbrace{\left(1-e^{-2 \pi x}\right)^{-2}}_{<1.004} \, e^{-2 \pi x} - 0.958 \cdot 2 \pi \, e^{-\pi x}\\
		& < 6.35 \, e^{-2\pi x} - 6 \, e^{- \pi x}
	\end{align}
		
	We proceed with an upper bound for $\psi'$ for $x > 1$.
	\begin{align}
		\psi'(x) & = \sum_{k \geq 1} \left( -\frac{(2 k \pi)^2 e^{-2 k \pi x}}{\underbrace{\left(1-e^{-2 k \pi x}\right)^2}_{<1}} + 2 \frac{((2 k -1) \pi)^2 e^{-(2 k -1) \pi x}}{\underbrace{\left(1+e^{-(2 k -1) \pi x}\right)^2}_{1<}} \right)\\
		& < -4 \pi^2 e^{-2 \pi x} + 2 \pi^2 \sum_{k \geq 1} \left( (2 k -1)^2 e^{-(2 k -1) \pi x} \right)\\
		& = -4 \pi^2 e^{-2 \pi x} + 2 \pi^2 \underbrace{\left(1 + 6 e^{-2 \pi x} + e^{-4 \pi x}  \right) \left( 1 - e^{-2 \pi x}\right)^{-3}}_{<1.017} \, e^{- \pi x} \\
		& < -39.4 \, e^{-2 \pi x} + 20.1 \, e^{-\pi x}.
	\end{align}
	In the second to last line we used the fact that (for $|q|<1$)
	\begin{align}
		\sum_{k \geq 1}(2k-1)^2 q^{2k-1} & = \sum_{k \geq 1} k^2 q^k - \sum_{k \geq 1} (2k)^2 q^{2k}\\
		& = q \frac{1+q}{(1-q)^3} - 4 q^2 \frac{1+q^2}{(1-q^2)^3} \\
		& = q \frac{1 + 6 q^2 + q^4}{(1-q^2)^3}.
	\end{align}
	The closed expressions for the according geometric series are obtained by taking derivatives of the classical geometric series and for the series involving only even term, $q$ is substituted by $q^2$.
	
	With the same techniques we bound $\psi''$ from above for $x > 1$.
		
	\begin{align}
		\psi''(x) & = \sum_{k \geq 1} \left( (2 k \pi)^3 e^{-2 k \pi x} \, \frac{\left(1+e^{-2 k \pi x}\right)}{\left(1-e^{-2 k \pi x}\right)^3} \right.\\
		& \qquad \qquad - \left. 2 \, ((2 k -1) \pi)^3 e^{-(2 k -1) \pi x} \, \frac{\left(1 - e^{-(2k-1)\pi x}\right)}{\left(1+e^{-(2 k -1) \pi x}\right)^3} \right)\\
		& < \sum_{k \geq 1} \left(\left(2 k \pi \right)^3 e^{-2 k \pi x}\underbrace{\left(1 + e^{-2\pi}\right)}_{< 1.002} \underbrace{\left(1 - e^{-2\pi}\right)^{-3}}_{< 1.006}\right)\\
		& \qquad \qquad
		- 2 \pi^3 e^{-\pi x} \left(\underbrace{\left( 1 - e^{-\pi x} \right) \left(1 + e^{-\pi x}\right)^{-3} + 27 \, e^{-2 \pi x} \left( 1 - e^{-3\pi x} \right) \left(1 + e^{-3\pi x}\right)^{-3}}_{>0.89}\right)\\
		& < 8.07 \, \pi^3 \, e^{-2 \pi x} \underbrace{\left( e^{- 4 \pi x}+4 e^{-2 \pi x} + 1 \right) \left(1-e^{-2 \pi x}\right)^{-4}}_{< 1.02} - 1.78 \, \pi^3 e^{-\pi x}\\
		& < 255.5 \, e^{-2 \pi x} - 55.1 \, e^{-\pi x}.
	\end{align}
	
	This time we truncated the negative series over the odd integers after 2 terms, as the monomial already has degree 3 and, therefore, the second term contributes some weight to the series evaluated near $x = 1$.
	
	Now, for $x > 1$ we have the following estimate;
	\begin{align}
		& \left(x \frac{d}{dx}\right)^3 \log \left( \theta_3(x) \right) = x \, \psi(x) + 3 x^2 \, \psi'(x) + x^3 \, \psi''(x)\\
		& < \left( 6.35 \, x - 118.2 \, x^2 + 255.5 \, x^3 \right) e^{-2 \pi x} + \left( -6 \, x +  60.3 \, x^2 - 55.1 \, x^3 \right) e^{-\pi x}.
	\end{align}
	
	It is obvious that the last expression asymptotically tends to zero from below, hence, there exists some $x_0 > 1$ such that
	\begin{equation}
		\left( x \dfrac{d}{dx} \right)^3 \log (\theta_3(x)) < 0, \qquad \forall x > x_0.
	\end{equation}
	
	We will now determine $x_0$. By the above estimates, it is enough to show that the following inequality holds for $x > x_0 > 1$;
	\begin{equation}\label{eq_x0}
		\left( 6.35 - 118.2 \, x + 255.5 \, x^2 \right) e^{-\pi x} + \left( -6 + 60.3 \, x - 55.1 \, x^2 \right) < 0
	\end{equation}
	It is not hard to check that $-6 + 60.3 \, x - 55.1 \, x^2$ is strictly decreasing for $x > 1$. Also, we note that expressions of the form $\left( a + b \, x + c \, x^2\right) e^{-\pi x}$ are strictly decreasing for
	\begin{equation}
		x > \frac{2c - \pi b + \sqrt{\pi^2 b^2 - 4 \pi^2 a c + 4 c^2}}{2 \pi c}.
	\end{equation}
	For $a = 6.35$, $b = -118.2$ and $c = 255.5$, we find out that for $x>1$ the expression is indeed strictly decreasing. Therefore, it suffices to find a value $x_0$ such that \eqref{eq_x0} is true, as it then holds for all $x > x_0$. For $x = 1.1$ the left-hand side of \eqref{eq_x0} is smaller than $-0.4$ and we found the desired value $x_0 = 1.1$.
	
	\textit{Second step}: We will now show that $\left(x \frac{d}{dx}\right)^3 \log \left( \theta_3(x) \right)$ is strictly decreasing on an interval containing $\left( 1, x_0 \right)$. To do so, we apply the differential operator $x \frac{d}{dx}$ on \eqref{eq_log_log_concanve_prop}, i.e.,
	\begin{equation}\label{eq_dx4}
		\begin{aligned}
			\left(x \frac{d}{dx}\right)^4 \log \left( \theta_3(x) \right) & = x \frac{d}{dx} \left(x \, \psi(x) + 3 x^2 \, \psi'(x) + x^3 \, \psi''(x)\right)\\
			& = x \, \psi(x) + 7 x^2 \, \psi'(x) + 6 x^3 \, \psi''(x) + x^4 \, \psi'''(x).
		\end{aligned}
	\end{equation}
	
	In order to control the last expression for $x \in(1,x_0)$, we need good estimates on $\psi'''$ on this interval. We start in the same manner as for the lower order derivatives.
	
	\begin{align}
		\psi'''(x) & = \sum_{k \geq 1} \left(
		  -(2 k \pi)^4 e^{-2 k \pi x} \frac{\left(e^{-4 k \pi x} + 4 e^{-2 \pi  k x} + 1 \right)}{\left(1 - e^{-2 k \pi x}\right)^4}
		\right.\\
		& \qquad \qquad \left.
		  + 2 \, ((2k-1) \pi)^4 \, e^{-(2k-1)\pi x} \frac{\left(e^{-2 (2k-1)\pi x} - 4 e^{-(2k-1)\pi x} + 1\right)}{\left(1 + e^{-(2k-1)\pi x}\right)^4}
		\right)\\
		& < -16 \, \pi ^4 e^{-2 \pi x} \underbrace{(e^{-4 \pi x} + 4 e^{-2 \pi x} + 1) \left(1- e^{-2 \pi x}\right)^{-4}}_{> 1}\\
		& \qquad \qquad + 2 \pi^4 \sum_{k \geq 1} \left( (2k-1)^4 \, e^{-(2k-1)\pi x} \left(1 - e^{-(2k-1)\pi x}\right)^2 \left(1 + e^{-(2k-1)\pi x}\right)^{-4}\right).
	\end{align}
	Now, we make use of the fact that we only need to establish that the expression in \eqref{eq_dx4} is negative on the interval $(1,x_0)$. Due to the continuity of the function, this estimate then also holds on a slightly larger interval, which we do not have to specify. We estimate that, on $(1, x_0)$,
	\begin{equation}
		\left(1 - e^{-(2k-1)\pi x}\right)^2 \left(1 + e^{-(2k-1)\pi x}\right)^{-4} < \left(1 - e^{-1.1 \cdot \pi}\right)^2 \left(1 + e^{-1.1 \cdot \pi}\right)^{-4} < 0.83, \quad k = 1.
	\end{equation}
	As $k \to \infty$, the value of the left-hand side of the above inequality tends to 1 from below, which gives the uniform estimate
	\begin{equation}
		\left(1 - e^{-(2k-1)\pi x}\right)^2 \left(1 + e^{-(2k-1)\pi x}\right)^{-4} < 1, \quad \forall k \geq 2.
	\end{equation}
	Therefore, on the interval $(1,x_0)$ we get the estimate
	\begin{align}
		\psi'''(x) & < - 16 \, \pi ^4 \, e^{-2 \pi x} + 1.66 \, \pi^4 \, e^{-\pi x} + 2 \, \pi^4 \sum_{k \geq 2} (2k-1)^4 \, e^{-(2k-1) \pi x}.
	\end{align}
	We now use the closed expression for the following geometric series, established in the same manner as in the estimates for $\psi''$;
	\begin{equation}\label{eq_series_q}
		\sum_{k \geq 2} (2k-1)^4 q^{2k-1} = q \, \frac{81 q^2 +220 q^4 + 86 q^6 - 4 q^8 + q^{10}}{\left(1 - q^ 2 \right)^5}, \qquad |q|<1.
	\end{equation}
	For $q = e^{- \pi x}$ and $x \in (1, x_0)$ we have that
	\begin{equation}
		\frac{81 q^2 +220 q^4 + 86 q^6 - 4 q^8 + q^{10}}{\left(1 - q^ 2 \right)^5} < \left. \frac{81 q^2 +220 q^4 + 86 q^6 + q^{10}}{\left(1 - q^ 2 \right)^5} \right|_{q = e^{-\pi}} < 0.16.
	\end{equation}
	Therefore, we get that, on the interval $(1,x_0)$,
	\begin{align}
		\psi'''(x) & < -16 \, \pi^4 \, e^{-2 \pi x} + 1.98 \, \pi^4 e^{-\pi x} \\
		& < -1558 \, e^{-2 \pi x} + 193 \, e^{- \pi x}.
	\end{align}
	In total, it follows that for $x \in (1,x_0)$ the following inequality holds;
	\begin{equation}
		\begin{aligned}
			\left(x \frac{d}{dx}\right)^4 \log \left( \theta_3(x) \right) & = x \, \psi(x) + 7 x^2 \, \psi'(x) + 6 x^3 \, \psi''(x) + x^4 \psi'''(x)\\
			& < \left(6.35 \, x - 275.8 \, x^2 + 1533 \, x^3 - 1558 \, x^4 \right) e^{- 2 \pi x}\\
			& \quad + \left(-6 \, x + 140.7 \, x^2 - 330.6 \, x^3 + 193 \, x^4 \right) e^{-\pi x}.
		\end{aligned}
	\end{equation}
	We need to verify that
	\begin{equation}
		\left(6.35 \, x - 275.8 \, x^2 + 1533 \, x^3 - 1558 \, x^4 \right) e^{- 2 \pi x} + \left(-6 \, x + 140.7 \, x^2 - 330.6 \, x^3 + 193 \, x^4 \right) e^{-\pi x} < 0
	\end{equation}
	for $x \in (1,x_0)$. This is equivalent to showing that
	\begin{equation}
		\left(6.35 - 275.8 \, x + 1533 \, x^2 - 1558 x^3 \right) e^{- \pi x} + \left(-6 + 140.7 \, x - 330.6 x^2 + 193 x^3 \right) < 0
	\end{equation}
	In order to establish this result, we note that
	\begin{align}
		\left(6.35 - 275.8 \, x + 1533 \, x^2 - 1558 x^3 \right) < 0, \qquad x > 1.
	\end{align}
	Hence, we get the estimate
	\begin{align}
		\left(6.35 - 275.8 \, x + 1533 \, x^2 - 1558 \, x^3 \right) e^{- \pi x} & < \left(6.35 - 275.8 \, x + 1533 \, x^2 - 1558 \, x^3 \right) e^{- 1.1 \cdot \pi}\\
		& < 0.3 - 8.7 \, x + 48.6 \, x^ 2 - 49 \, x^3,
	\end{align}
	for $x \in (1,x_0)$. Therefore, by showing the truth of the (stronger) inequality
	\begin{equation}\label{eq_final}
		-5.7 + 132 \, x - 282 \, x^2 + 144 \, x^3 < 0, \quad x \in (1,x_0),
	\end{equation}
	we can finish the proof. It is not hard to show that the left-hand side of the above inequality has critical points at $\frac{11}{36} \notin (1, x_0)$ (local maximum) and $1$ (local minimum). Finally, by checking the values on the boundary of the interval $(1, x_0)$ we see that \eqref{eq_final} indeed holds.
	
	By combining the two steps we see that the function $\left(x \frac{d}{dx}\right)^3 \log \left( \theta_3(x) \right)$ is strictly decreasing at least on the interval $(1, 1.1)$ and negative for $x > 1.1$. As the value at $x = 1$ is zero, we can finally conclude that the expression given in equation \eqref{eq_log_log_concanve_prop} is negative for $x > 1$. Due to the already mentioned anti--symmetry with respect to the point $(1,0)$ the function has to be positive for $0 < x <1$.
\end{proof}

Vesalainen and Ernvall--Hytönen \cite{HytVes_Personal}, \cite{HytVes_Secrecy_2017} proved Proposition \ref{prop_log_log_concave} independently from this work. They used the result to prove a conjecture by Hernandez and Sethuraman\footnote{The author does not claim any credit for solving the conjecture by Hernandez and Sethuraman since the author was not aware of the conjecture prior to the appearance of \cite{HytVes_Secrecy_2017}.} \cite{Hernandez_Master_2016} (see also the references in \cite{HytVes_Secrecy_2017}). The following result by Vesalainen and Ernvall--Hytönen is an immediate consequence of Proposition \ref{prop_log_log_concave}.
\begin{theorem}[Ernvall--Hytönen, Vesalainen]
	For $\alpha, \beta \in \R_+$ with $1 \leq \alpha < \beta$ and $x \in \R_+$ we define the function
	\begin{equation}
		g(x) = \frac{\theta_3(\beta x) \theta_3(\tfrac{x}{\beta})}{\theta_3(\alpha x) \theta_3(\tfrac{x}{\alpha})}.
	\end{equation}
	Then
	\begin{equation}
		g(x) = g(\tfrac{1}{x})
	\end{equation}
	and $g$ assumes its global maximum only for $x = 1$. Also, the function is strictly increasing for $x \in (0,1)$ and strictly decreasing for $x \in (1,\infty)$.
\end{theorem}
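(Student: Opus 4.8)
The plan is to first record the (purely formal) symmetry $g(x)=g(1/x)$, then reduce the monotonicity claim to a sign statement for the logarithmic derivative of $g$, and finally extract that sign from the convexity information already packaged in Proposition \ref{prop_log_log_concave}.

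\emph{Symmetry.} First I would verify $g(x)=g(1/x)$ directly from the functional equation \eqref{eq_Poisson_theta_3}, which I use in the form $\theta_3(1/u)=\sqrt{u}\,\theta_3(u)$. Applying it to each of the four arguments of $g(1/x)$ (writing $\beta/x=1/(x/\beta)$, etc.) produces the factor $\sqrt{x/\beta}\,\sqrt{\beta x}=x$ in the numerator and $\sqrt{x/\alpha}\,\sqrt{\alpha x}=x$ in the denominator; these cancel, leaving $g(1/x)=g(x)$. (I read the variable $s$ in the statement as a typo for $x$.) Consequently it suffices to treat $x>1$, the behaviour on $(0,1)$ following by this symmetry. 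We may also assume $\alpha<\beta$, since $\alpha=\beta$ gives $g\equiv1$.

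\emph{Reduction to a one-variable statement.} Since $g>0$, monotonicity of $g$ is governed by the sign of $F(x):=x\tfrac{d}{dx}\log g(x)$. Using the scale invariance $x\tfrac{d}{dx}\log\theta_3(cx)=\phi(cx)$ with $\phi(x):=x\tfrac{\theta_3'(x)}{\theta_3(x)}$, I obtain
\begin{equation}
	F(x)=\phi(\beta x)+\phi(x/\beta)-\phi(\alpha x)-\phi(x/\alpha).
\end{equation}
Passing to the logarithmic scale $y=\log x$, $a=\log\alpha$, $b=\log\beta$ (so $0\le a<b$) and writing $\tilde\phi(y):=\phi(e^{y})$, this reads
\begin{equation}
	F=\tilde\phi(y+b)+\tilde\phi(y-b)-\tilde\phi(y+a)-\tilde\phi(y-a)=:H(y).
\end{equation}
Two one-variable facts now enter. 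Identity \eqref{eq_identity_theta_3} says $\phi(x)+\phi(1/x)=-\tfrac12$, i.e. $\tilde\phi(y)+\tilde\phi(-y)=-\tfrac12$; differentiating shows that $\tilde\phi'$ is \emph{even}. Proposition \ref{prop_log_log_concave} states that $\bigl(x\tfrac{d}{dx}\bigr)^3\log\theta_3(x)=\tilde\phi''(y)$ is negative for $y>0$, so $\tilde\phi'$ is \emph{strictly decreasing on $(0,\infty)$}. Together these say $\tilde\phi'(s)$ is a strictly decreasing function of $|s|$.

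\emph{Conclusion.} For fixed $y>0$ set $P(t)=\tilde\phi(y+t)+\tilde\phi(y-t)$, so that $H(y)=P(b)-P(a)$. Then $P'(t)=\tilde\phi'(y+t)-\tilde\phi'(y-t)$, and since $(y+t)^2-(y-t)^2=4yt>0$ we have $|y+t|>|y-t|$, whence $\tilde\phi'(y+t)<\tilde\phi'(y-t)$ and $P'(t)<0$ for every $t>0$. As $0\le a<b$, this forces $H(y)=P(b)-P(a)<0$, i.e. $F(x)<0$ for $x>1$; thus $g$ is strictly decreasing on $(1,\infty)$, strictly increasing on $(0,1)$ by symmetry, with the unique global maximum at $x=1$. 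The genuinely hard part is entirely absorbed into Proposition \ref{prop_log_log_concave} — the sign change of the third logarithmic derivative of $\theta_3$ is the delicate estimate. Given it, the only remaining obstacle is to arrange the difference of the two symmetric shifts so that the evenness of $\tilde\phi'$ together with its one-sided monotonicity yields a definite sign; recognizing this reduction is the conceptual crux.
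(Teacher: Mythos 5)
Your proof is correct, and it follows exactly the route the paper intends: the paper states this theorem without proof, calling it an ``immediate consequence'' of Proposition \ref{prop_log_log_concave}, and your argument is precisely that derivation, combining the concavity/convexity sign of $\left(x \frac{d}{dx}\right)^3 \log\theta_3$ with the evenness of $\tilde\phi'$ coming from identity \eqref{eq_identity_theta_3}. You have merely written out the details (including the symmetry $g(x)=g(1/x)$ via \eqref{eq_Poisson_theta_3} and the reduction to comparing symmetric shifts) that the paper leaves implicit.
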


\subsection{A Result Involving Theta-1 and a Proof of Theorem \ref{thm_main}}\label{sec_Theta_1}
In contrast to the fact that $\vartheta_1(0,\tau) = 0$ for all $\tau \in \mathbb{H}$, its derivative with respect to $z$ evaluated at $(0,i x)$, $x \in \R_+$ is not the zero function, in particular
\begin{equation}\label{eq_theta_1}
	\begin{aligned}
		\partial_z \vartheta_1(0,i x) & = \sum_{k \in \Z} (-1)^k (2k+1) e^{-\pi (k+1/2)^2 x}\\
		& = 2 \, e^{-\tfrac{\pi}{4} x} \prod_{k \in \N} \left(1 - e^{-2 k \pi x}\right)^3, \quad x \in \R_+.
	\end{aligned}
\end{equation}
The above function is often denoted by $\theta_1'(x)$ (see e.g., \cite{ConSlo98}), where the prime indicates differentiation with respect to $z$. The representation as an infinite product is also well--known and follows from the Jacobi triple product representation of $\Theta$ (Proposition \ref{prop_triple_product}). Since the notation $\theta_1'$ could be misleading in this work, we rather use the notation from \cite{Borwein_AGM_1987} and set
\begin{equation}
	\theta_1^+(x) = \partial_z \vartheta_1(0, i x), \quad x \in \R_+.
\end{equation}
Now we have the following identity (see e.g., \cite[chap.~3]{Borwein_AGM_1987} or \cite[Chap.~4]{ConSlo98})
\begin{equation}\label{eq_identity_theta_1}
	\theta_2(x) \theta_3(x) \theta_4(x) = \theta_1^+(x).
\end{equation}
From this identity we derive the following result.
\begin{theorem}\label{thm_psi}
	For $x \in \R_+$, we define the function
	\begin{equation}
		\psi_1(x) = x^{3/4} \theta_1^+(x).
	\end{equation}
	This function also has the following representations
	\begin{equation}
		\psi_1(x) = x^{3/4} \prod_{j=2}^4 \theta_j(x) = 2 \, x^{3/4} e^{-\tfrac{\pi}{4} x} \prod_{k \in \N} \left(1 - e^{-2 k \pi x}\right)^3.
	\end{equation}		
	The function is positive and assumes its global maximum only for $x = 1$ and $x \frac{\psi_1'(x)}{\psi_1(x)}$ is strictly decreasing. Furthermore,
	\begin{equation}
		\psi_1(x) = \psi_1(\tfrac{1}{x})
	\end{equation}	
	and, hence,
	\begin{equation}
		\left(x \frac{d}{dx} \right)^n \psi_1(x) = (-1)^n \left(x \frac{d}{dx} \right)^n \psi_1(\tfrac{1}{x}), \quad n \in \N.
	\end{equation}
\end{theorem}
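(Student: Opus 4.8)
The plan is to establish the claims in increasing order of difficulty: the two product representations, positivity, the symmetry $\psi_1(x)=\psi_1(1/x)$ and its $\left(x\tfrac{d}{dx}\right)^n$ consequence, and finally the monotonicity of $x\tfrac{\psi_1'(x)}{\psi_1(x)}$ together with the maximum at $x=1$, which is the real content. The two representations are immediate: $\theta_1^+ = \theta_2\theta_3\theta_4$ is identity \eqref{eq_identity_theta_1}, and the infinite product for $\theta_1^+$ is the second line of \eqref{eq_theta_1}; multiplying each by $x^{3/4}$ gives the two displayed forms for $\psi_1$. Positivity is then clear from the product form, since $x^{3/4}e^{-\pi x/4}>0$ and every factor $\left(1-e^{-2k\pi x}\right)^3>0$ for $x\in\R_+$.

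For the symmetry I would feed the Poisson identities \eqref{eq_Poisson_theta_3} and \eqref{eq_Poisson_theta_2_4} into the product $\theta_2\theta_3\theta_4$. Using $\theta_2(1/x)=\sqrt{x}\,\theta_4(x)$, $\theta_3(1/x)=\sqrt{x}\,\theta_3(x)$ and $\theta_4(1/x)=\sqrt{x}\,\theta_2(x)$ one obtains $\theta_1^+(1/x)=x^{3/2}\theta_1^+(x)$, i.e.\ $\theta_1^+$ satisfies the generalized Jacobi identity of Lemma \ref{lem_identity} with $r=\tfrac{3}{2}$ and $f=g=\theta_1^+$. Multiplying $x^{3/2}\theta_1^+(x)=\theta_1^+(1/x)$ by $x^{-3/4}$ yields exactly $\psi_1(x)=\psi_1(1/x)$. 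The relation $\left(x\tfrac{d}{dx}\right)^n\psi_1(x)=(-1)^n\left(x\tfrac{d}{dx}\right)^n\psi_1(1/x)$ then follows by the same induction as in Proposition \ref{pro_theta3_symmetry}, since each application of $x\tfrac{d}{dx}$ to a function of $1/x$ produces one factor $-1$ through the chain rule.

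The heart is that $\Phi(x):=x\tfrac{\psi_1'(x)}{\psi_1(x)}$ is strictly decreasing and that $\psi_1$ has a unique maximum at $x=1$; these are linked because the symmetry gives $\Phi(x)=-\Phi(1/x)$, hence $\Phi(1)=0$, so once $\Phi$ is strictly decreasing it is positive on $(0,1)$ and negative on $(1,\infty)$, forcing $\psi_1$ to increase then decrease with its only maximum at $x=1$. To prove $\Phi$ strictly decreasing I would show $x\tfrac{d}{dx}\Phi=\left(x\tfrac{d}{dx}\right)^2\log\psi_1<0$. Since $\log\psi_1=\tfrac{3}{4}\log x+\log\theta_1^+$ and the operator $\left(x\tfrac{d}{dx}\right)^2$ annihilates $\tfrac{3}{4}\log x$, this reduces to $\left(x\tfrac{d}{dx}\right)^2\log\theta_1^+<0$. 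Now $\left(x\tfrac{d}{dx}\right)^2\log\theta_1^+=\sum_{j=2}^{4}\left(x\tfrac{d}{dx}\right)^2\log\theta_j$, and by Propositions \ref{prop_theta2_monotone}, \ref{prop_theta3_monotone} and \ref{prop_theta4_monotone} the $\theta_2$- and $\theta_4$-terms are negative while the $\theta_3$-term is positive. The hard part will therefore be controlling the positive $\theta_3$-contribution, since the signs alone do not decide the matter.

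To overcome this I would exploit the symmetry a second time: from $\log\theta_1^+(x)=-\tfrac{3}{2}\log x+\log\theta_1^+(1/x)$ the even-order quantity $\left(x\tfrac{d}{dx}\right)^2\log\theta_1^+$ is invariant under $x\mapsto 1/x$, so it suffices to prove negativity for $x\geq 1$. On that range I would differentiate the product form directly, obtaining
\begin{equation}
	\left(x\tfrac{d}{dx}\right)^2\log\theta_1^+(x) = -\tfrac{\pi x}{4} + 6\pi x\sum_{k\geq1}\frac{k\,e^{-2\pi kx}}{1-e^{-2\pi kx}} - 12\pi^2 x^2\sum_{k\geq1}\frac{k^2 e^{-2\pi kx}}{\left(1-e^{-2\pi kx}\right)^2},
\end{equation}
and then bound the single positive (middle) sum against the leading term $-\tfrac{\pi x}{4}$, in the spirit of the estimates in the proof of Proposition \ref{prop_log_log_concave}. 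For $x\geq 1$ the middle sum is at most $6\pi x\,e^{-2\pi x}\left(1-e^{-2\pi}\right)^{-3}$, whose ratio to $\tfrac{\pi x}{4}$ is at most $24\left(1-e^{-2\pi}\right)^{-3}e^{-2\pi x}<1$, so the positive term is strictly dominated and the whole expression is negative (the third, negative term only helps). Reflecting by symmetry to $0<x<1$ then gives $\left(x\tfrac{d}{dx}\right)^2\log\theta_1^+<0$ on all of $\R_+$, completing the argument. The only delicate point is the bookkeeping of constants in the exponential estimate, but the rapid decay for $x\geq 1$ makes this comfortable, which is precisely why reducing to $x\geq 1$ by symmetry, rather than attacking small $x$ directly, is the right move.
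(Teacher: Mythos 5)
Your proposal is correct, and up to the final step it coincides with the paper's proof: the two representations, positivity, the Poisson-summation symmetry $\theta_1^+(\tfrac{1}{x}) = x^{3/2}\,\theta_1^+(x)$ giving $\psi_1(x)=\psi_1(\tfrac{1}{x})$, and the induction for $\left(x\frac{d}{dx}\right)^n$ are handled identically, as is the reduction of the uniqueness of the maximum to the strict decrease of $x\frac{\psi_1'(x)}{\psi_1(x)}$. Where you genuinely diverge is how that decrease is proven. You differentiate a second time and show $\left(x\frac{d}{dx}\right)^2\log\theta_1^+<0$, which forces you into geometric-series estimates with numerical constants on $x\geq 1$ and a reduction to that range via the $x\mapsto\tfrac{1}{x}$ invariance. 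The paper needs none of this: from the same product representation it computes
\begin{equation}
	x\frac{d}{dx}\log\left(\psi_1(x)\right) = \tfrac{3}{4} - \tfrac{\pi}{4}x + 3\sum_{k\in\N}\frac{2k\pi x}{e^{2k\pi x}-1},
\end{equation}
and observes that since $t\mapsto t/(e^t-1)$ is decreasing on $\R_+$ while $t=2k\pi x$ increases with $x$, every summand is decreasing in $x$; together with the strictly decreasing term $-\tfrac{\pi}{4}x$ the whole expression is strictly decreasing — no estimates, no case split, no symmetry needed for this step. Your route is nevertheless valid: your formula for $\left(x\frac{d}{dx}\right)^2\log\theta_1^+$ is correct, the bound $24\left(1-e^{-2\pi}\right)^{-3}e^{-2\pi x}<1$ for $x\geq 1$ holds with a large margin (the left side is about $0.045$ at $x=1$), and the invariance of the second-order expression under $x\mapsto\tfrac{1}{x}$ is genuine; it buys you an explicit quantitative negativity bound in the spirit of the proof of Proposition \ref{prop_log_log_concave}, at the price of bookkeeping the structure of the problem makes unnecessary. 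Your preliminary remark about why the decomposition $\sum_{j=2}^4\left(x\frac{d}{dx}\right)^2\log\theta_j$ cannot be settled by the signs from Propositions \ref{prop_theta2_monotone}, \ref{prop_theta3_monotone} and \ref{prop_theta4_monotone} alone is exactly the reason both you and the paper pass to the product form of $\theta_1^+$ rather than summing the three theta contributions.
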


\begin{proof}
	The different representations of $\psi_1$ follow immediately from \eqref{eq_theta_1} and \eqref{eq_identity_theta_1}. It is obvious from the product representation that $\psi_1$ is positive. The symmetry of $\psi_1$ follows easily from \eqref{eq_Poisson_theta_3} and \eqref{eq_Poisson_theta_2_4}. The (anti--)symmetry for the repeated application of the differential operator $x \frac{d}{dx}$ immediately follows by induction.
	Differentiating $\log(\psi_1)$ on a logarithmic scale, i.e., applying the differential operator $x \frac{d}{dx}$, yields
	\begin{equation}
		x \frac{d}{dx} \log(\psi_1(x)) = \frac{3}{4} + \sum_{j=2}^4x \frac{\theta_j'(x)}{\theta_j(x)}.
	\end{equation}
	In particular, this means that $\psi_1$ has a critical point at $x = 1$ since
	\begin{equation}
		\sum_{j=2}^4 \frac{\theta_j'(1)}{\theta_j(1)} = -\frac{3}{4},
	\end{equation}
	which follows from \eqref{eq_identity_theta_3} and \eqref{eq_identity_theta_2_4}. The fact that $\psi_1$ assumes its global maximum only for $x = 1$ will follow from the fact that $x \frac{\psi_1'(x)}{\psi_1(x)}$ is strictly decreasing because, since $x > 0$ and $\psi_1(x) > 0$ we have that
	\begin{equation}
		\psi_1'(x) < 0 \Leftrightarrow x \frac{\psi_1'(x)}{\psi(x)} < 0.
	\end{equation}		
	Since $\frac{\psi_1'(1)}{\psi_1(1)} = 0$, the negativity of $\psi_1'$ for $x > 1$ already follows if we can show that $x \frac{\psi_1'(x)}{\psi_1(x)}$ is decreasing. We use the infinite product representation to establish this result.
	\begin{equation}
		\begin{aligned}
			x \frac{d}{dx} \log(\psi_1(x)) & = x \frac{d}{dx} \log \left(2 \, x^{3/4} e^{-\tfrac{\pi}{4} x} \prod_{k \in \N} \left(1 - e^{-2 k \pi x}\right)^3 \right)\\
			& = x \frac{d}{dx} \left( \log(2)  +\tfrac{3}{4} \log(x) - \tfrac{\pi}{4}x + 3 \sum_{k \in \N} \log \left(1 - e^{-2 k \pi x}\right) \right)\\
			& = \tfrac{3}{4} - \tfrac{\pi}{4} x + 3 \sum_{k \in \N} \frac{2 k \pi x}{e^{2 k \pi x} - 1}.
		\end{aligned}
	\end{equation}
	It is quickly verified that, except for the constant term, all terms in this expression are decreasing and the proof is finished.
\end{proof}

\begin{figure}[ht]
	\includegraphics[width=.45\textwidth]{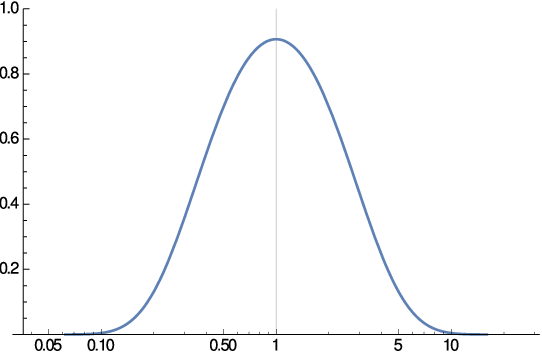}
	\hfill
	\includegraphics[width=.45\textwidth]{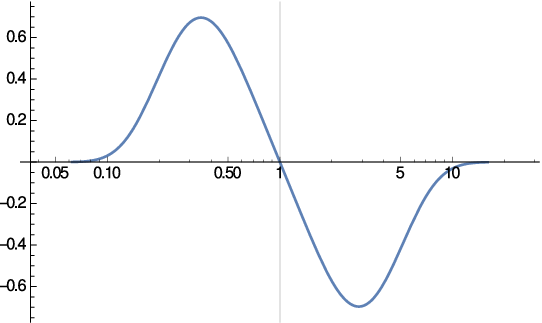}
	\caption{The function $\psi_1(x)$ and its derivative $x \frac{d}{dx} \psi_1(x)$ plotted on a logarithmic scale.}\label{fig_psi}
\end{figure}

\begin{figure}[ht]
	\includegraphics[width=.45\textwidth]{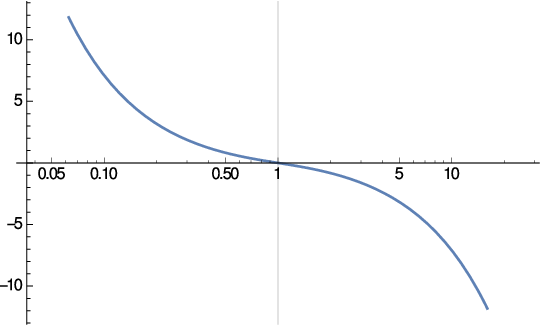}
	\hfill
	\includegraphics[width=.45\textwidth]{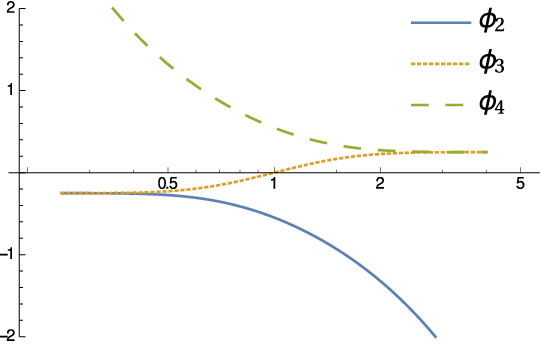}
	\caption{The logarithmic derivative $x \frac{d}{dx} \log(\psi_1(x))$ plotted on a logarithmic scale and split up in its components $\phi_j(x) = x \frac{\theta_j'(x)}{\theta_j(x)}+\tfrac{1}{4}$, $j=2,3,4$.}\label{fig_log}
\end{figure}

Figure \ref{fig_psi} shows the function $\psi_1(x)$ and its derivative with logarithmic scaling on the $x$--axis, revealing the symmetry properties. In Figure \ref{fig_log} we see the logarithmic derivative of $\psi_1$ on a logarithmic scale, i.e., $x \tfrac{d}{dx} \log (\psi_1(x))$. The behavior of this function is strongly influenced by the above established properties of Jacobi's theta functions.

Finally, we observe that, for $x \in \R_+$,
\begin{equation}
	\psi_1(x) = 2 x^{3/4} |\eta(i x)|^3,
\end{equation}
which follows from equation \eqref{eq_eta}. Theorem \ref{thm_main} now follows immediately from Theorem \ref{thm_psi}.

\end{document}